\documentclass[a4paper,12pt]{article}
\usepackage[T2A]{fontenc}
\usepackage[dvips]{graphicx}
\usepackage{color}
\usepackage{graphicx}
\usepackage{amsmath,amssymb,amsthm}
\usepackage{wrapfig}


\textwidth 6.5in

\textheight 8.4in

\hoffset -2.0cm

\voffset -1.5cm

\sloppy

\frenchspacing
\righthyphenmin=2

\newtheorem{lemma}{Lemma}

\newtheorem{thm}{Theorem}
\newtheorem{cor}{Corollary}[section]
\newtheorem{prop}{Proposition}[section]
\newcounter{def}

\newcommand{\dist}{\mathop{\rm dist}\nolimits}
\newcommand{\cg}{\mathop{\rm CG}\nolimits}

\begin{document}

\title{The strong thirteen spheres problem}
\author{Oleg R. Musin\thanks{Research supported in part by NSF
grant DMS-0807640 and NSA grant MSPF-08G-201.}, Alexey S. Tarasov \thanks{Research supported by program P15 of presidium of RAS and RFBR grant 08-07-00430.}}
\maketitle

\begin{abstract}
The thirteen spheres problem is asking if  13 equal size nonoverlapping
spheres  in three dimensions can touch another sphere of the same size.
This problem was the subject of the famous discussion between Isaac Newton
and David Gregory in 1694. The
problem was solved by Sch\"utte
and van der Waerden only in 1953.

A natural extension of this problem is the strong thirteen
spheres problem (or the Tammes problem for 13 points) which asks to find an arrangement and the maximum
radius of 13 equal size nonoverlapping spheres touching the unit sphere. In the paper we give a solution of this long-standing open problem in geometry.  Our computer-assisted proof  is based on a  enumeration
of the so-called irreducible  graphs.
\end{abstract}

\section{Introduction}

\subsection{The thirteen spheres problem}

The {\it kissing number} $k(n)$ is the highest number of equal
nonoverlapping spheres in ${\Bbb R}^n$ that touch another
sphere of the same size. In three dimensions the kissing number
problem is asking how many white billiard balls can
{\em kiss} (touch) a black ball.

The most symmetrical configuration, 12 balls around
another, is achieved if the 12 balls are placed at positions
corresponding to the vertices of a regular icosahedron concentric
with the central ball. However, these 12 outer balls do not kiss
each other and may all be moved freely. So perhaps if you moved
all of them to one side, a 13th ball would possibly fit in?

This problem was the subject of the famous discussion between Isaac
Newton and David Gregory in 1694 (May 4, 1694; see  \cite{Sz1}
for details of this discussion). Most reports say that Newton
believed the answer was 12 balls, while Gregory
thought that 13 might be possible. However, Casselman \cite{Cas}
found some puzzling features in this story.

This problem is often called the {\it thirteen spheres problem}.
Hoppe \cite{Hop} thought he had solved the problem (1874). But
there was a mistake - an analysis of this mistake was published
by Hales in 1994 \cite{Hales}
(see also \cite{Sz1}).
Finally this problem was solved by Sch\"utte and van der Waerden
in 1953 \cite{SvdW2}. A subsequent two-page sketch of an elegant
proof was given  by Leech \cite{Lee} in 1956. 
Leech's proof was presented in the first edition  of the well-known book
by Aigner and Ziegler \cite{AZ}, the authors removed this chapter from the second edition because a complete proof would have to include so much spherical trigonometry.  

The thirteen spheres problem continues to be of interest, and
new proofs have been published in the last several years by Hsiang
\cite{Hs}, Maehara \cite{Ma, Manew} (this proof is based on Leech's proof),  B\"or\"oczky \cite{Bor},
Anstreicher \cite{Ans}, and Musin \cite{Mus13}.

Note that for $n>3$ the kissing number problem is solved only
for $n=8, 24$ \cite{Lev2,OdS}, and for $n=4$ \cite{Mus4} (see  \cite{PZ} for a beautiful exposition of this problem).

\subsection{The Tammes problem}

If $N$ unit spheres kiss the unit sphere in ${\Bbb R}^n$, then
the set of kissing points
is an arrangement on the central sphere such that the (Euclidean)
distance between any two points is at least 1. So the kissing
number problem can be stated in other way: How many points can
be placed on the surface of ${\Bbb S}^{n-1}$ so that the angular
separation between any two points be at least $60^{\circ}$?

This leads to an important generalization: a finite subset $X$
of ${\Bbb S}^{n-1}$ is called a {\it spherical $\psi$-code} if for
every pair  $(x,y)$ of $X$ with $x\ne y$ 
its  angular distance $\dist(x,y)$ is at least $\psi$.

Let $X$ be a finite subset of ${\Bbb S}^{2}$. Denote
$$\psi(X):=\min\limits_{x,y\in X}{\{\dist(x,y)\}}, \mbox{ where } x\ne y.$$
Then $X$ is a spherical $\psi(X)$-code.

Denote by $d_N$ the largest angular separation $\psi(X)$ with $|X|=N$ that can be attained
 in ${\Bbb S}^{2}$, i.e.
$$
d_N:=\max\limits_{X\subset{\Bbb S}^2}{\{\psi(X)\}}, \, \mbox{ where } \;  |X|=N.
$$
In other words,
{\it how are $N$ congruent, not overlapping circles distributed on the sphere
when their common radius of the circles has to
be as large as possible?}

This question, also known as the problem of the ``inimical dictators'': {\it Where should $N$ dictators build their palaces on a planet so as to be as far away from each other as possible?} The problem 
was first asked by the Dutch botanist Tammes \cite{Tam} (see \cite[Section 1.6: Problem 6]{BMP}), who
was led to this problem by examining the distribution of
openings  on the pollen grains of different flowers.

The Tammes problem is presently solved only for several values
of $N$: for $N=3,4,6,12$ by L. Fejes T\'oth \cite{FeT0}; for
$N=5,7,8,9$ by Sch\"utte
and van der Waerden \cite{SvdW1}; for $N=10,11$ by Danzer \cite{Dan} (for  $N=11$ see also B\"or\"oczky \cite{Bor11});
and for $N=24$ by Robinson \cite{Rob}.

\subsection{The Tammes problem for $N=13$}

The first unsolved case of the Tammes problem is $N=13$, which
is particularly interesting because of its relation to the kissing
problem and the Kepler conjecture \cite{BS,FeT,Sz1}.

Actually this problem is equivalent to {\it the strong thirteen
spheres problem}, which asks to find an arrangement and the maximum
radius of 13 equal size nonoverlapping spheres in ${\Bbb R}^3$
touching the unit sphere.

It is clear that the equality $k(3)=12$ implies $d_{13} < 60^{\circ}$.
B\"or\"oczky and Szab\'o \cite{BS} proved that $d_{13} < 58.7^{\circ}$.
Recently Bachoc and Vallentin \cite{BV} have shown that $d_{13}
< 58.5^{\circ}$.

We note that there is an arrangement of 13 points on
${\Bbb S}^{2}$ such that the distance between any two points of
the arrangement is at least  $57.1367^{\circ}$ (see \cite[Ch. VI, Sec. 4]{FeT}). This arrangement
is shown in Fig.~\ref{fig1}.

\begin{figure}[htb]
\begin{center}
\includegraphics[clip,scale=1]{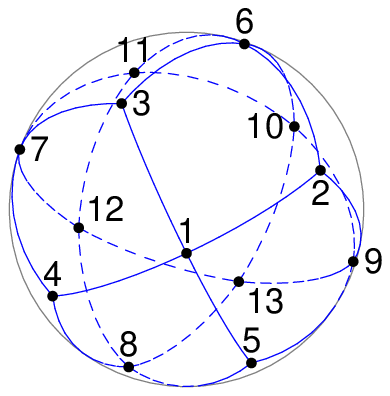}
~~~~
\includegraphics[clip,scale=1]{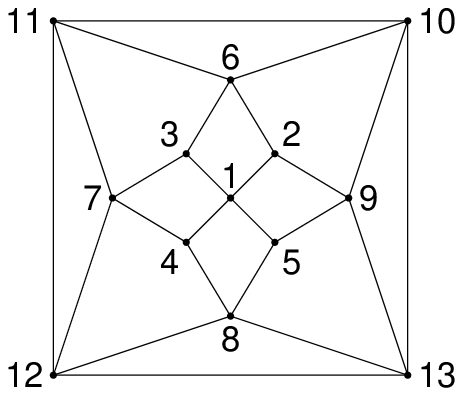}
\end{center}
\caption{An arrangement of 13 points $P_{13}$ and its contact graph $\Gamma_{13}$ with $\psi(P_{13})\approx 57.1367^{\circ}$.}
\label{fig1}
\end{figure}

\medskip

\noindent{\bf Remark.} Denote the constant $\psi(P_{13})$ by $\delta_{13}$. The value $d=\delta_{13}$ can be found analytically. Indeed, we have (see for notations and functions  Fig.~\ref{fig9} and Section 3): $u_0+2u_{13}+u_2=2\pi$, where $u_2=\pi/2, \; a:=u_0=\alpha(d), \; u_{13}=\rho(u_9,d), \; u_9=2\pi-2u_5,\; u_5=\rho(u_2,d)$. This yields:
$$
2\tan\left(\frac{3\pi}{8}-\frac{a}{4}\right)=\frac{1-2\cos{a}}{\cos^2{a}}, \; \; \cos{d}=\frac{\cos{a}}{1-\cos{a}}.
$$
Thus, we have $a_{13}:=\alpha(\delta_{13})\approx 69.4051^{\circ}$ and $\delta_{13}\approx 57.1367^{\circ}$.

\section{Main theorem}

In this paper we present a solution of the Tammes problem for $N=13$.

\begin{thm} The arrangement of 13 points in ${\Bbb S}^2$ which is shown in Fig. 1
is the best possible, the maximal arrangement is unique up to isometry,  and   $d_{13}=\delta_{13}$.
\end{thm}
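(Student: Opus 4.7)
My plan is to study a hypothetical maximizer $X \subset \mathbb{S}^2$ with $|X|=13$ and $\psi(X) = d_{13}$ through its \emph{contact graph} $\cg(X)$: vertices are the points of $X$, and two vertices are joined by a geodesic arc whenever their angular distance equals $\psi(X)$. Since $d_{13}<60^\circ$ (from $k(3)=12$) and $d_{13}\ge \delta_{13}>57^\circ$ (from the exhibited configuration $P_{13}$), the contact graph is a planar simple graph embedded on $\mathbb{S}^2$ whose edge arcs partition the sphere into spherical polygonal faces, each face having all sides $\ge d_{13}$. The goal is to show that $\cg(X)$ must be combinatorially $\Gamma_{13}$ and then that the unique geometric realization attains $d=\delta_{13}$.

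The first step is to show that for a maximizer the graph $\cg(X)$ is \emph{irreducible}: no continuous deformation of $X$ preserving the edge constraints $\dist(x_i,x_j)\ge \psi(X)$ for $\{i,j\}\in E(\cg(X))$ can strictly increase $\psi(X)$. This local-rigidity condition rules out vertices of low degree (a vertex of degree $\le 2$ could be pushed away from its neighbours) and more generally any subconfiguration that still has free motion. Combined with Euler's formula $V-E+F=2$, lower bounds on the area $\ar$ of a spherical $k$-gon with sides $\ge d_{13}$, and the identity $\sum_F \ar(F)=4\pi$, this yields hard upper bounds on $|E|$ and on the degree of each vertex, leaving only a finite---though possibly long---list of combinatorial candidates.

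The second step is a computer-assisted enumeration of all irreducible graphs on $13$ vertices compatible with these combinatorial constraints. For each candidate $G$ one sets up the system of spherical-trigonometric equations expressing that every graph edge has length exactly some unknown $d$, writing closing relations around each face (sums of face angles at each vertex equal $2\pi$), analogous to the relation $u_0+2u_{13}+u_2=2\pi$ that identifies $\delta_{13}$ for $\Gamma_{13}$. Solving each such system---or, more efficiently, maximizing $d$ over realizations of $G$ subject to the non-edge inequalities $\dist(x_i,x_j)\ge d$---one shows $d(G)\le \delta_{13}$, with equality only for $G=\Gamma_{13}$.

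The main obstacle will be the last step. The enumeration of irreducible graphs is combinatorial and can be controlled, but for each graph the maximization problem is nonconvex and involves transcendental functions on the sphere; rigorous exclusion of $d>\delta_{13}$ requires a verified numerical technique (interval arithmetic or a symbolic elimination argument) applied uniformly across all cases. A secondary difficulty is carefully justifying the reduction to irreducible graphs: one must argue that any non-irreducible extremal configuration can be continuously deformed into an irreducible one with at least the same $\psi$, and then apply the enumeration to the deformed configuration to conclude $d_{13}\le\delta_{13}$. Uniqueness up to isometry follows by tracking the equality cases throughout both the enumeration and the optimization.
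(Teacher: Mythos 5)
Your proposal follows essentially the same route as the paper: reduce to the contact graph of a maximizer, invoke irreducibility and Euler-type counting to constrain degrees and face sizes, enumerate the candidate planar graphs by computer, and eliminate them by bounding the attainable $d$ via the face-angle closing relations, with a final exact treatment of the equality cases. The one wrinkle you correctly anticipate but should not underestimate is that certain proper subgraphs of $\Gamma_{13}$ (the paper's $\Gamma_{13}^{(1)},\Gamma_{13}^{(2)},\Gamma_{13}^{(3)}$) admit realizations with $d$ arbitrarily close to $\delta_{13}$, so no outer numerical approximation can kill them; the paper handles exactly these survivors by a separate exact monotonicity argument (its Lemma 2), which plays the role of your ``symbolic elimination'' fallback.
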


\subsection{Basic definitions}

\noindent{\bf Contact graphs.} Let $X$ be a finite set in ${\Bbb
S}^2$. The {\it contact graph} $\cg(X)$  is the graph with vertices
in $X$ and edges $(x,y), \, x,y\in X$ such that $\dist(x,y)=\psi(X)$.

\medskip

\noindent{\bf Shift of a single vertex.} Let $X$ be a finite
set in ${\Bbb S}^2$. Let $x\in X$ be a vertex of $\cg(X)$ with
$\deg(x)>0$, i.e. there is $y\in X$ such that $\dist(x,y)=\psi(X)$.
We say that there exists a shift of $x$ if $x$ can be
slightly shifted to $x'$ such that $\dist(x',X\setminus\{x\})>\psi(X)$.

\medskip

\noindent{\bf Danzer's flip.}
Danzer \cite[Sec. 1]{Dan} defined the following flip. Let
$x,y,z$ be  vertices of $\cg(X)$ with $\dist(x,y)=\dist(x,z)=\psi(X)$. We say that $x$ is flipped
over $yz$ if $x$ is replaced by its mirror image $x'$ relative
to the great circle $yz$ (see Fig.~\ref{fig3}). We say that this flip
is {\it Danzer's flip} if $\dist(x',X\setminus\{x,y,z\}) > \psi(X)$.

\medskip

\begin{figure}[h]
\begin{center}
\includegraphics[clip,scale=1]{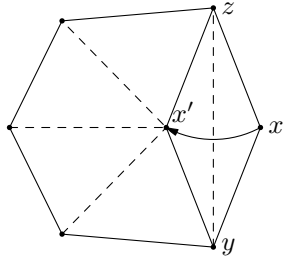}
\end{center}
\caption{Danzer's flip}
\label{fig3}
\end{figure}

\medskip

\noindent{\bf Irreducible graphs.}
We say that the graph $\cg(X)$ is {\it irreducible}\footnote{This
terminology was used by Sch\"utte - van der Waerden \cite{SvdW1,SvdW2},
Fejes T\'oth \cite{FeT}, and Danzer \cite{Dan}.}  (or {\it
jammed}) if there are neither Danzer's flips nor shifts of vertices.

\medskip

\noindent{\bf $P_{13}$ and $\Gamma_{13}$.}
Denote by $P_{13}$ the arrangement of 13 points in Fig. 1. Let
$\Gamma_{13}:=\cg(P_{13})$. It is not hard  to see that the graph
$\Gamma_{13}$ is irreducible.

\medskip
\noindent{\bf Maximal graphs $G_{13}$.}
Let $X$ be a subset of ${\Bbb S}^2$ with $|X|=13$ and $\psi(X)=d_{13}$.
Denote by $G_{13}$ the graph $\cg(X)$.
Actually, this definition does not assume that $G_{13}$ is unique.
We use this designation for some $\cg(X)$ with $\psi(X)=d_{13}$.

\medskip

\noindent{\bf Graphs $\Gamma_{13}^{(i)}$.} Let us define four
planar graphs $\Gamma_{13}^{(i)}$ (see Fig. ~\ref{eliminated}),
where $i=0,1,2,3$, and $\Gamma_{13}^{(0)}:=\Gamma_{13}$. Note
that $\Gamma_{13}^{(i)}, \, i>0$, is obtained from $\Gamma_{13}$
by removing certain edges.

\begin{figure}[h]
\begin{center}
\includegraphics[clip,scale=1]{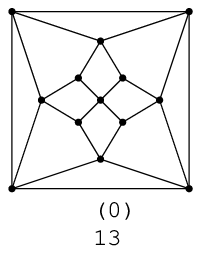}
~~~~
\includegraphics[clip,scale=1]{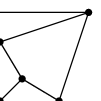}
~~~~
\includegraphics[clip,scale=1]{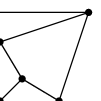}
~~~~
\includegraphics[clip,scale=1]{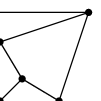}
\end{center}
\caption{Graphs $\Gamma_{13}^{(i)}$ .}
\label{eliminated}
\end{figure}



\subsection{Main lemmas}

\begin{lemma} $G_{13}$ is isomorphic to  $\Gamma_{13}^{(i)}$ with $i=0,1,2,$ or $3$.
\end{lemma}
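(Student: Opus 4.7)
The plan is to first establish that $G_{13}$ must be \emph{irreducible}, and then enumerate all irreducible contact graphs on $13$ vertices compatible with the known lower bound $d_{13}\ge\delta_{13}$. If $\cg(X)$ admitted a shift of a vertex or a Danzer flip, then $X$ could be perturbed to a configuration with strictly larger $\psi$, contradicting the maximality of $d_{13}$; therefore $G_{13}$ is irreducible. It thus suffices to list all irreducible graphs realizable on $\mathbb{S}^2$ with minimum angular separation at least $\delta_{13}$, and to verify that precisely the four graphs $\Gamma_{13}^{(i)}$ survive.

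First I would derive basic combinatorial restrictions on $G_{13}$. At any vertex $v$, two neighbors at angular distance $d\ge\delta_{13}$ from $v$ subtend a spherical angle at $v$ of at least $\alpha(d)\ge\alpha(\delta_{13})\approx 69.4051^{\circ}$; since $6\,\alpha(\delta_{13})>360^{\circ}$, the maximum degree in $G_{13}$ is $5$. Irreducibility forces $\deg(v)\ge 3$ for every $v$: an isolated or pendant vertex can obviously be shifted, and a vertex of degree $2$ can be shifted tangentially along the locus equidistant from its two neighbors. Planarity of $G_{13}$ (short geodesic edges cannot cross without forcing two endpoints at distance $<d$) together with the handshake lemma and Euler's formula then confines $|E|$ and $|F|$ to narrow ranges.

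Next I would tighten these combinatorial bounds by metric constraints on the spherical faces. Every face of $G_{13}$ is a spherical polygon whose edges have length $\ge\delta_{13}$, so a $k$-gonal face has area bounded below by an explicit function of $k$ and $\delta_{13}$; coupled with $\sum_{F}\ar(F)=4\pi$ this severely restricts the face-size distribution, and triangular faces are subject to further shape inequalities dictated by the edge-length bound and the angle bound $\alpha(d)$. A computer-assisted enumeration in the style of Sch\"utte--van der Waerden and Danzer then generates, from the degree, face-count and area bounds, a finite list of candidate graphs. For each candidate one checks whether a realization on $\mathbb{S}^2$ with $\psi\ge\delta_{13}$ exists and is irreducible, rejecting those admitting an explicit shift, an explicit Danzer flip, or no valid embedding at all. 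The surviving graphs should be exactly $\Gamma_{13}^{(0)},\Gamma_{13}^{(1)},\Gamma_{13}^{(2)},\Gamma_{13}^{(3)}$.

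The hard part is the enumeration itself. The number of planar graphs on $13$ vertices with vertex degrees in $\{3,4,5\}$ is very large, and for each candidate the realizability and irreducibility tests reduce to systems of non-linear spherical-trigonometric inequalities. Designing pruning rules strong enough to make the search tractable -- discarding graphs whose local angle sums around some vertex exceed $2\pi$, whose face areas cannot sum to $4\pi$ under the length bound, or which exhibit an obvious shift or Danzer flip in any candidate embedding -- and combining them with a rigorous (interval-arithmetic) treatment of the surviving geometric cases is the technical core of the argument.
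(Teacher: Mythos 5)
There is a genuine gap: you assert that irreducibility forces $\deg(v)\ge 3$ for every vertex, on the grounds that ``an isolated or pendant vertex can obviously be shifted.'' This is false for isolated vertices, and the error is fatal for this particular lemma. A vertex $v$ that is isolated in $\cg(X)$ already satisfies $\dist(v,X\setminus\{v\})>\psi(X)$, so there is no shift to perform and no contradiction with maximality: the minimum distance $\psi(X)$ is attained entirely among the other $12$ points, and $v$ merely has to fit somewhere at distance $\ge\psi(X)$ from them. The correct statement (Danzer; Proposition 3.3 of the paper) is that vertex degrees in an irreducible graph are $0$, $3$, $4$, or $5$, and the paper's Corollary 3.1(4) further shows that an isolated vertex must lie inside a hexagonal face containing no other isolated vertex. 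Your enumeration, restricted to minimum degree $3$, would never generate $\Gamma_{13}^{(3)}$, which has exactly one isolated vertex sitting in a hexagon --- yet that graph is one of the four survivors the lemma asserts, and it can only be ruled out later by the finer two-parameter analysis of Lemma 2, not at the enumeration stage. (The authors remark that they themselves missed $\Gamma_{13}^{(3)}$ by hand and only found it by computer.) So as written your candidate list is provably incomplete and the lemma does not follow.

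Apart from this, your plan follows the paper's architecture (irreducibility of the maximal contact graph for $N>6$, planarity, degree and face-size bounds, computer enumeration, geometric pruning), but the pruning mechanism differs: you propose lower bounds on face areas summing to $4\pi$ in the Fejes T\'oth--Robinson style, whereas the paper works with the angle variables $\{u_i\}$ of the faces, imposes the exact vertex equations $\sum_{k\in I(v)}u_k=2\pi$ together with linear relaxations of the nonlinear shape constraints for quadrilaterals, pentagons and hexagons, and eliminates graphs by iterated linear programming with branching. Pure area bounds are known to be too weak at $N=13$ (they do not even recover $d_{13}<58.5^{\circ}$), so if you pursue your route you would still need shape constraints of essentially the LP type to kill the roughly $10^8$ candidates plantri produces; this, together with reinstating the isolated-vertex cases, is where your proposal would have to be repaired.
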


\begin{lemma}
 $G_{13}$ is isomorphic to $\Gamma_{13}^{(0)}$  and   $d_{13}=\delta_{13} \approx 57.1367^{\circ}$.
\end{lemma}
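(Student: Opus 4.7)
By Lemma 1 the contact graph $G_{13}$ of any optimal configuration is isomorphic to one of $\Gamma_{13}^{(0)}, \Gamma_{13}^{(1)}, \Gamma_{13}^{(2)}, \Gamma_{13}^{(3)}$, and the explicit arrangement $P_{13}$ witnesses $d_{13}\ge\delta_{13}$. The plan is therefore twofold: first, rule out the three auxiliary graphs by showing that any realization whose contact graph is $\Gamma_{13}^{(i)}$, $i=1,2,3$, satisfies $\psi(X)<\delta_{13}$; and second, for $i=0$ derive $d_{13}=\delta_{13}$ from the angle-sum equations on $\Gamma_{13}$, obtaining uniqueness as a byproduct.

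For each graph $\Gamma_{13}^{(i)}$ the set-up is a standard trigonometric system. Draw the graph on $\mathbb{S}^2$ with every edge of length exactly $d=\psi(X)$; this partitions the sphere into spherical polygonal faces. Every triangular face is rigid, with each angle equal to $\alpha(d)$, the angle of an equilateral spherical triangle of side $d$. Each non-triangular face contributes angular unknowns along its boundary, coupled by the spherical law of cosines. Finally, the angles of the faces incident to any vertex must sum to $2\pi$, giving one equation per vertex. The resulting finite system, combined with the inequality $d\le 60^{\circ}$, either has no solution with $d\ge\delta_{13}$ (ruling out $i\in\{1,2,3\}$) or has the unique solution $d=\delta_{13}$ (confirming $i=0$).

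For $i=0$ the Remark already sketches the reduction. Around a specific vertex of $\Gamma_{13}$ the equation $u_0+2u_{13}+u_2=2\pi$, combined with the chain $u_2=\pi/2$, $u_0=\alpha(d)$, $u_5=\rho(u_2,d)$, $u_9=2\pi-2u_5$, $u_{13}=\rho(u_9,d)$, collapses to the single trigonometric identity displayed there, whose unique root in $(0,60^{\circ})$ is $\delta_{13}$. Since every angle variable is then determined, the realization is rigid up to isometry, which yields both $d_{13}=\delta_{13}$ and the uniqueness claim of the main theorem.

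\emph{The main obstacle} is the case analysis for $i=1,2,3$. Deleting an edge from $\Gamma_{13}$ merges two triangular faces into a larger face and introduces one or more free angular parameters, so the vertex-sum system becomes a genuinely nonlinear trigonometric problem with no obvious closed form. In the spirit of the paper's computer-assisted proof, I would formulate each case as a constrained maximization of $d$ over the admissible angles and certify, via rigorous interval arithmetic, that the supremum is strictly less than $\delta_{13}\approx 57.1367^{\circ}$. Only once all three alternatives have been excluded does Lemma 1, combined with the $i=0$ computation, conclude the proof.
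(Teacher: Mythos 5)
Your overall architecture matches the paper's: invoke Lemma 1, use $P_{13}$ for the lower bound $d_{13}\ge\delta_{13}$, exclude $\Gamma_{13}^{(1)},\Gamma_{13}^{(2)},\Gamma_{13}^{(3)}$, and pin down $d=\delta_{13}$ for $\Gamma_{13}^{(0)}$ via the closed-form equations of the Remark. The problem is that you defer the entire substance of the lemma --- the exclusion of the three auxiliary graphs --- to a proposed interval-arithmetic certification that ``the supremum of $d$ is strictly less than $\delta_{13}$,'' and that certification cannot succeed, because the statement it would certify is false. Each $\Gamma_{13}^{(i)}$ with $i>0$ is $\Gamma_{13}$ with edges removed, so its realizations form a family whose closure contains $P_{13}$ itself: the supremum of $d$ over the closed admissible region (where the missing-edge constraints are the non-strict inequalities $u_{17}\ge u_0$, $u_{18}\ge u_0$, etc.) is exactly $\delta_{13}$, attained in the limit where the removed edges become tight and the graph degenerates to $\Gamma_{13}^{(0)}$. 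The paper explicitly flags this: subgraphs of $\Gamma_{13}$ ``can be infinitesimally close to $\Gamma_{13}$, and so they cannot be eliminated by computer program.'' No rigorous numerical bound on a closed region can separate $\sup d$ from $\delta_{13}$ here.

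What the paper actually does for $i=1,2,3$ is an exact, low-dimensional analytic argument that exploits strictness. The vertex angle-sum equations propagate through the faces (via $\rho(\cdot,d)$) to express every angle as a function of one or two parameters; for $\Gamma_{13}^{(1)}$ everything becomes a function of $d$ alone, and the missing edge forces the strict inequality $u_{18}(d)>u_0\ge a_{13}$, which by monotonicity of $u_{18}$ happens only for $d<\delta_{13}$. The cases $i=2,3$ are reduced to this one by a boundary analysis of the two-parameter region ($D_1\cap D_2$ is a single point) and, for the isolated vertex of $\Gamma_{13}^{(3)}$, by first shifting it so that two of its distance constraints become equalities. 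To repair your proof you would need to replace the interval-arithmetic step with this kind of parameter reduction plus a monotonicity (or boundary-point) argument that converts the strict inequality coming from each absent edge into the strict conclusion $d<\delta_{13}$.
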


It is clear that Lemma 2 yields Theorem 1. Now our goal
is to prove these lemmas.

\section{Properties of $G_{13}$}

\subsection{Combinatorial properties of $G_{13}$}

\begin{prop} Let $X$ be a finite set in ${\Bbb S}^2$. Then  $\cg(X)$ is a planar graph.
\end{prop}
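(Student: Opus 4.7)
The plan is to realize $\cg(X)$ on the sphere itself by drawing each edge as the (unique) shortest geodesic arc between its endpoints, and then to show that no two such arcs can properly cross. Since $\mathbb{S}^2$ minus a point is homeomorphic to $\mathbb{R}^2$, a crossing-free drawing on $\mathbb{S}^2$ yields a planar embedding, which is exactly what we need. Note that we may assume $\psi(X)<\pi$ (otherwise there are at most two points and the statement is trivial), so the geodesic arc realizing each edge is well defined and unique.

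Suppose for contradiction that two distinct edges $(x,y)$ and $(u,v)$ of $\cg(X)$ cross at a point $p\in\mathbb{S}^2$ that lies in the interior of both arcs. First I would dispose of the degenerate possibilities: if the two arcs lie on a common great circle, then among the four endpoints some endpoint of one edge lies strictly between the endpoints of the other, yielding a pair of vertices of $X$ at distance strictly less than $\psi(X)$, a contradiction; and if the two edges share a common endpoint, the same overlap argument applies. Hence we may assume that $x,y,u,v$ are four distinct vertices and that $p$ is not on the great circle carrying either edge in a degenerate way.

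Next I would apply the spherical triangle inequality, in its strict form (valid because the three vertices of each triangle are not collinear on a great circle, by the genericity reduction above):
\begin{equation*}
\dist(x,u) < \dist(x,p)+\dist(p,u), \qquad \dist(y,v) < \dist(y,p)+\dist(p,v).
\end{equation*}
Because $p$ lies on the geodesic arc from $x$ to $y$ we have $\dist(x,p)+\dist(p,y)=\dist(x,y)=\psi(X)$, and similarly $\dist(u,p)+\dist(p,v)=\psi(X)$. Adding the two strict inequalities and substituting gives
\begin{equation*}
\dist(x,u)+\dist(y,v) < \dist(x,y)+\dist(u,v) = 2\,\psi(X),
\end{equation*}
so at least one of $\dist(x,u)$, $\dist(y,v)$ is strictly less than $\psi(X)$. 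This contradicts the definition of $\psi(X)$ as the minimum pairwise angular distance between distinct points of $X$.

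There is essentially no hard part here: the only subtlety is handling the degenerate configurations (shared endpoints or collinear great-circle arcs), which is what allows the spherical triangle inequality to be applied in its strict form. Once those cases are cleared, the triangle-inequality computation is immediate, and the geodesic drawing of $\cg(X)$ on $\mathbb{S}^2$ is crossing-free, so $\cg(X)$ is planar.
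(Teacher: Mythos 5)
Your proof is correct and follows essentially the same route as the paper: the paper's one-line argument asserts that two crossing shortest arcs would force one of the four ``cross'' distances below $\psi(X)$, and you simply supply the standard strict-triangle-inequality computation (plus the degenerate cases) that justifies this assertion.
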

\begin{proof} Let $a,b,x,y\in X$ with $\dist(a,b)=\dist(x,y)=\psi(X)$.
Then the shortest arcs ${ab}$ and ${xy}$ don't intersect. Otherwise,
the length of at least one of the arcs $ax, ay, bx, by$ has to be
less than $\psi(X)$. This yields the planarity of $\cg(X)$.
\end{proof}

The following three  propositions are proved in \cite{Dan} (also see \cite[Chap. VI]{FeT}, \cite{BS,BS14}).

\begin{prop} Let $X$ be a subset of ${\Bbb S}^2$ with $|X|=N$ and $\psi(X)=d_N$. Then for  $N>6$ the graph $\cg(X)$ is irreducible.
\end{prop}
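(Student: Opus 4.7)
The plan is to argue by contradiction. Suppose $X$ satisfies $|X|=N$, $\psi(X)=d_N$, and $\cg(X)$ is not irreducible; I then aim to construct $\tilde X$ with $|\tilde X|=N$ and $\psi(\tilde X)>d_N$, contradicting the definition of $d_N$.

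First I would reduce the Danzer's flip case to a shift case. If a Danzer's flip at some triple $(x,y,z)$ is available, performing it produces $x'$ whose only tight neighbours are $y$ and $z$, with all other distances strictly above $d_N$ by the hypothesis on the flip. A short tangent-space computation shows that pushing $x'$ infinitesimally perpendicular to the arc $yz$, away from the reflecting great circle, strictly increases both $\dist(x',y)$ and $\dist(x',z)$; by continuity the other distances remain above $d_N$. Thus $x'$ admits a shift in the new configuration, and I may assume $X$ itself admits a shift of some vertex $x$.

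Performing the shift yields $X'=(X\setminus\{x\})\cup\{x'\}$ with $\dist(x',y)\ge d_N+\eta$ for some $\eta>0$ and every $y\ne x'$; so $\psi(X')=d_N$ and $x'$ is isolated in $\cg(X')$. The plan is now to exploit the strict inequality $d_{N-1}>d_N$, which holds precisely when $N>6$, to produce a small perturbation $Y$ of $X'\setminus\{x'\}$ with $\psi(Y)>d_N$ and total vertex displacement less than $\eta/2$. Then $\tilde X=Y\cup\{x'\}$ satisfies $\psi(\tilde X)>d_N$, since distances within $Y$ exceed $d_N$ and distances from $x'$ to $Y$ are at least $(d_N+\eta)-\eta/2>d_N$ by the triangle inequality, yielding the desired contradiction.

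The main obstacle is producing the perturbation $Y$. A priori, $X'\setminus\{x'\}$ could itself be a local maximum of $\psi$ on the space of $(N-1)$-point configurations: even though its value $d_N$ is strictly below the global optimum $d_{N-1}$, the force-balance conditions at its vertices are inherited from the global maximality of $X$. To break this local obstruction I would iterate the shift/flip reduction on $X'\setminus\{x'\}$ (where $N-1\ge 6$), strictly reducing the number of tight edges in the residual subgraph, and continue until the remaining tight graph is small enough that an interpolation toward an $(N-1)$-point configuration of higher $\psi$ becomes available with cumulative displacement under $\eta/2$. Controlling the bookkeeping between $\eta$ and the achievable improvement in $\psi$ is the delicate numerical part of the argument, and it is exactly where the hypothesis $N>6$ is essential, the strict inequality $d_{N-1}>d_N$ providing the ``room'' needed for the improvement.
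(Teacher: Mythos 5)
The paper does not actually prove this proposition --- it is quoted from Danzer \cite{Dan} (see also Fejes T\'oth and B\"or\"oczky--Szab\'o) --- so your attempt has to be judged against the standard argument. Your first reduction is essentially fine: after a Danzer flip the point $x'$ has only $y$ and $z$ as tight neighbours, and pushing it off the great circle $yz$ increases $\dist(x',y)$ and $\dist(x',z)$; note, though, that this last step uses $d_N<\pi/2$ (for a point at distance $\ge\pi/2$ from $y$ the perpendicular push would not increase the distance), which is one place the hypothesis $N>6$ actually enters and which you do not record. Likewise the observation that a shift makes $x'$ isolated with all distances $\ge d_N+\eta$ is correct.

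The final step, however, is a genuine gap, and the overall strategy is the wrong one. You need an $(N-1)$-point perturbation $Y$ of $X\setminus\{x\}$ with $\psi(Y)>d_N$ and displacement under $\eta/2$. The inequality $d_{N-1}>d_N$ only says that $X\setminus\{x\}$ is not a \emph{global} maximizer on $N-1$ points; it gives no local improvement, and locally jammed configurations with $\psi$ below the optimum genuinely exist --- this is precisely why the paper must enumerate \emph{all} irreducible graphs rather than just locate a global optimum. Your proposed remedy fails on both counts: the iteration of shifts and flips on $X\setminus\{x\}$ may terminate immediately (if that set is already irreducible), and even when reductions are available they keep $\psi$ equal to $d_N$ rather than increasing it, so no amount of iteration produces the strict improvement you need; moreover nothing ties the displacement required to reach a better configuration to the quantity $\eta$, which is arbitrarily small since the shift is only a ``slight'' one. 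The classical argument avoids all of this by not attempting to contradict extremality through a strict increase of $\psi$: each shift or flip produces another extremal $N$-point configuration with strictly fewer contact edges, so the process terminates at an extremal configuration whose contact graph is irreducible, and it is that configuration one feeds into the enumeration. In other words, the provable (and sufficient) content is an existence/``without loss of generality'' statement; your scheme, which tries to show that a reducible extremal configuration cannot exist at all by exhibiting a configuration with $\psi>d_N$, would require escaping a possible local maximum on $N-1$ points, and no mechanism for that is supplied.
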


\begin{prop} Let $X\subset {\Bbb S}^2$. If the graph $\cg(X)$ is  irreducible, then degrees of its vertices can take only the values $0$ (isolated vertices), $3$, $4$, or  $5$.
\end{prop}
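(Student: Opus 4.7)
The plan is to bound the degrees on both sides: first, that no vertex has degree at least $6$---a fact following from elementary spherical geometry and using only $\psi(X) > 0$, not irreducibility---and second, that irreducibility rules out degrees $1$ and $2$. Degree $0$ appears explicitly in the conclusion, so isolated vertices require no argument.

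For the upper bound, I would fix a vertex $x$ with $k$ neighbors, place $x$ at the north pole, and parametrize the neighbors $y_{1},\ldots,y_{k}$ by their longitudes $\theta_{1}<\cdots<\theta_{k}$ (cyclically ordered) on the circle of angular radius $\psi:=\psi(X)$ around $x$. The spherical law of cosines applied to the isosceles triangle $xy_{i}y_{j}$ yields
\[
\cos\dist(y_{i},y_{j})=\cos^{2}\psi+\sin^{2}\psi\,\cos(\theta_{j}-\theta_{i}),
\]
so the packing constraint $\dist(y_{i},y_{j})\ge\psi$ becomes
\[
\cos(\theta_{j}-\theta_{i})\le\frac{\cos\psi}{1+\cos\psi}.
\]
Since $\psi>0$, the right-hand side is strictly less than $1/2$, so every consecutive gap $\theta_{i+1}-\theta_{i}$ strictly exceeds $60^{\circ}$; the $k$ gaps sum to $360^{\circ}$ around the circle, forcing $k\le 5$.

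For the lower bound, I would produce an explicit shift whenever a vertex $x$ has degree $1$ or $2$. The degree-$1$ case is immediate: if $y$ is the unique neighbor, slide $x$ along the great circle $xy$ away from $y$ to a nearby $x'$; then $\dist(x',y)>\psi$, and the already strict inequalities $\dist(x,w)>\psi$ for $w\ne y$ persist for small perturbations. For degree $2$ with neighbors $y,z$, the natural strategy is to move $x$ in a tangent direction that simultaneously increases $\dist(\cdot,y)$ and $\dist(\cdot,z)$; the two ``away'' gradients at $x$ point in the directions opposite to $y$ and $z$ respectively, so a first-order shift works whenever these vectors are not antiparallel.

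The main obstacle is the degenerate case where $x,y,z$ all lie on a single great circle with $y,z$ on opposite sides of $x$. Here the first-order shift fails because the two gradients cancel, and one must check that a second-order shift still works. Taking $x$ at the north pole and $y,z$ at longitudes $0$ and $\pi$, the candidate $x'=(0,\epsilon,\sqrt{1-\epsilon^{2}})$ satisfies
\[
x'\cdot y=x'\cdot z=\cos\psi\,\sqrt{1-\epsilon^{2}}<\cos\psi
\]
for any $\epsilon\ne 0$ (using $\psi<\pi/2$, which holds in the regime of interest), whence $\dist(x',y),\dist(x',z)>\psi$; continuity handles the remaining vertices. This produces a valid shift, ruling out degree $2$, and completes the proof that vertex degrees lie in $\{0,3,4,5\}$.
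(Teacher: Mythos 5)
The paper does not actually prove this proposition: it is quoted from Danzer \cite{Dan} (``The following three propositions are proved in \cite{Dan}\dots''), so there is no in-paper argument to compare against. Your proof is the classical one and is essentially correct. The upper bound is right: the law of cosines gives $\cos(\theta_j-\theta_i)\le\cos\psi/(1+\cos\psi)<\tfrac12$ for $\psi>0$, hence every consecutive gap exceeds $60^\circ$ and the degree is at most $5$; and the shift constructions correctly rule out degrees $1$ and $2$ in an irreducible graph. The one caveat is the degenerate collinear degree-$2$ case: your perpendicular perturbation genuinely requires $\cos\psi>0$, since $x'\cdot y=\cos\psi\sqrt{1-\epsilon^2}$ is \emph{larger} than $\cos\psi$ when $\cos\psi<0$ (geometrically, for $\psi>\pi/2$ the two open caps $\{\dist(\cdot,y)>\psi\}$ and $\{\dist(\cdot,z)>\psi\}$ are externally tangent at $x$, so no shift exists and the degree-$2$ exclusion can actually fail). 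You flag the hypothesis $\psi<\pi/2$ explicitly, and it is harmless for this paper, where the proposition is only invoked for $|X|=13$ with $\psi<60^\circ$; but since the statement as printed carries no such restriction, strictly speaking your argument proves a slightly weaker (though fully sufficient) version, and a careful write-up should either add the hypothesis $\psi(X)<\pi/2$ or dispose of the large-$\psi$ configurations separately.
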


\begin{prop} Let $X\subset {\Bbb S}^2$ with $|X|=N$. If the graph $\cg(X)$ is  irreducible, then its faces are polygons with at most $\lfloor2\pi/d_N\rfloor$ vertices.
\end{prop}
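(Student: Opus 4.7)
The plan is to combine two ingredients: (i) every face of an irreducible $\cg(X)$ is a convex spherical polygon, and (ii) a convex spherical polygon has perimeter at most $2\pi$. Granting these, the boundary of a face with $k$ vertices has length $k\,\psi(X)$, so $k\,\psi(X)\le 2\pi$; in the regime where the proposition is applied (a maximal configuration with $\psi(X)=d_N$), this gives $k\le\lfloor 2\pi/d_N\rfloor$.

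For (i), fix a vertex $v$ of a face $F$ and let $\vec u_1,\dots,\vec u_d$ be the unit tangent vectors at $v$ pointing toward its $d\ge 3$ neighbors in $\cg(X)$ (Proposition~4). A shift of $v$ along a tangent direction $\vec w$ strictly increases every distance $\dist(v,u_i)$ to first order iff $\vec w\cdot\vec u_i<0$ for all $i$. Irreducibility forbids any such $\vec w$, which by LP duality is equivalent to $0$ lying in the interior of the convex hull of $\{\vec u_1,\dots,\vec u_d\}$ in the tangent plane; this in turn is equivalent to every cyclic angular gap between consecutive $\vec u_i$'s being strictly less than $\pi$. Each such gap is precisely the interior angle at $v$ of some face incident to $v$, so in particular the interior angle of $F$ at $v$ is $<\pi$. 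Applying this at every vertex of $F$, we conclude that $F$ is a convex spherical polygon.

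For (ii), one invokes the spherical Cauchy--Crofton formula: the perimeter of a convex $C\subset{\Bbb S}^2$ equals a normalized integral of $|\partial C\cap\gamma|$ over great circles $\gamma$, and convexity bounds the integrand by $2$, yielding perimeter at most $2\pi$; alternatively, $C$ lies in a closed hemisphere whose great-circle boundary has length $2\pi$, and perimeter is monotone under convex containment. Combining (i) and (ii) completes the argument. The main obstacle is step (i): one must correctly interpret positive spanning on the tangent plane and carry the strict inequality ``$<\pi$'' through, relying crucially on Proposition~4 to ensure $d\ge 3$ (otherwise the positive-spanning condition is unachievable and the argument collapses). Step (ii) is a standard result of spherical integral geometry.
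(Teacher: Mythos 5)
The paper does not actually prove this proposition: it is one of three facts attributed to Danzer (with pointers to Fejes T\'oth and B\"or\"oczky--Szab\'o), so there is no in-paper argument to compare yours against. That said, your reconstruction is the standard one and is essentially sound, and your step (i) is exactly the device the paper itself invokes informally at the start of Section 3.2 (``a concave vertex of a polygon can be shifted into its interior''), while step (ii) (perimeter of a convex spherical region is at most $2\pi$, hence $k\,\psi(X)\le 2\pi$) is the classical finish. Two caveats. First, the duality step is slightly overstated: Gordan's alternative says that the nonexistence of a direction $\vec w$ with $\vec w\cdot\vec u_i<0$ for all $i$ is equivalent to $0$ lying in the convex hull of the $\vec u_i$, not in its \emph{interior}; the degenerate case of a cyclic gap exactly equal to $\pi$ (two antipodal tangent directions) is not excluded by first-order reasoning. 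You should either add the second-order observation that a shift perpendicular to such an antipodal pair strictly increases both distances (valid here since $\psi(X)<\pi/2$, by the spherical Pythagorean theorem), or simply note that weak convexity (all face angles $\le\pi$) already suffices for the perimeter bound, so the strict inequality is not needed. Second, as you correctly flag, the argument yields $k\le\lfloor 2\pi/\psi(X)\rfloor$, which matches the stated bound $\lfloor 2\pi/d_N\rfloor$ only when $\psi(X)\ge d_N$; the proposition as literally written omits the hypothesis $\psi(X)=d_N$, but that is how it is applied (via the preceding proposition that maximal configurations with $N>6$ have irreducible contact graphs), so this is an imprecision of the statement rather than of your proof. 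A final small point: the claim that a face is a bona fide simple spherical polygon (so that local convexity at each vertex globalizes and the perimeter equals $k\,\psi(X)$) is being taken for granted, consistent with how the paper phrases the proposition.
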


B\"or\"oczky and Szab\'o {\cite[Lemma 8 and Lemma 9(iii)]{BS}} considered isolated vertices in  irreducible graphs  with 13 vertices.
\begin{prop}  Let $X\subset {\Bbb S}^2$ with $|X|=13$. Let the graph $\cg(X)$ be irreducible.
 If  $\cg(X)$ contains an isolated vertex, then it lies in the interior of a hexagon of $\cg(X)$ and this hexagon cannot contain other vertices of $\cg(X)$.
\end{prop}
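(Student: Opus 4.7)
The argument falls into two parts: first that the face $F$ of $\cg(X)$ containing the isolated vertex $v$ has exactly six sides, and then that $F$ contains no other vertex of $X$. Set $d := \psi(X)$ and list the vertices of $F$ cyclically as $y_1, \dots, y_k$, so that $\dist(y_i, y_{i+1}) = d$ and $\dist(y_i, y_j) \geq d$ otherwise. Since $v$ is isolated, $\dist(v, y_i) > d$ strictly for every $i$. The workhorse is the following estimate: in the spherical triangle $vy_iy_{i+1}$, whose opposite side has length $d$ and whose two adjacent sides strictly exceed $d$, the spherical law of cosines together with a short monotonicity check shows that the angle $\gamma_i$ at $v$ is strictly less than
\[
\alpha(d) \;:=\; \arccos\frac{\cos d}{1+\cos d},
\]
the apex angle of the equilateral spherical triangle of side $d$.

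For Part~1, the $k$ geodesic arcs $vy_i$ partition the total angle $2\pi$ around $v$ into $k$ sectors, so $2\pi = \sum_{i=1}^k \gamma_i < k\,\alpha(d)$ (with absolute values if $F$ fails to be star-shaped with respect to $v$, but the same bound survives). From $d < 60^\circ$ (forced by $k(3)=12$, so $d \leq d_{13} < 60^\circ$) one has $\alpha(d) < \alpha(60^\circ) = \arccos(1/3) \approx 70.53^\circ$, so $k > 2\pi/\alpha(d) > 5$, i.e.\ $k \geq 6$. Combined with the upper bound $k \leq 6$ from Proposition~5, $k=6$.

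For Part~2, observe that any vertex of $X$ lying in the open interior of a face of $\cg(X)$ is incident to no edge and is therefore isolated. So it suffices to exclude a second isolated vertex $v' \in F$. Assume, toward a contradiction, that such a $v'$ exists. Then $\{y_1,\dots,y_6,v,v'\}$ is a spherical code of minimum distance $d$, and the thirteen distances involving $v$ or $v'$ are all strictly above $d$. My plan is to extend the geodesic through $v$ and $v'$ until it meets $\partial F$ at two points $p,q$, thereby partitioning $F$ into two sub-polygons $F_1$ and $F_2$. The angle-sum argument from Part~1, now applied to $v$ within $F_1$ and to $v'$ within $F_2$ (with total angle $\pi$ on each side rather than $2\pi$), gives a bound in which one of the sectors at $v$ is subtended by the side $vv'$ of length $>d$ and is therefore strictly less than $\alpha(\dist(v,v'))$; the same holds at $v'$. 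Summing these refined sector bounds on the two sides and comparing with the total boundary-angle contribution forces a strict inequality that fails throughout the range $d < 60^\circ$.

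The main obstacle is Part~2: the plain bound $\gamma_i < \alpha(d)$ alone only yields $\alpha(d) > 60^\circ$, which is automatic for any $d>0$, so one must genuinely exploit the extra constraint $\dist(v,v')>d$. The chord-partition argument above is the cleanest route but requires a short case analysis to handle the degeneracies where a chord endpoint coincides with some $y_j$ or where the geodesic $vv'$ exits a non-convex face through an unexpected edge, together with a verification that the relevant chord stays inside $F$. Once these are disposed of, a routine spherical-trigonometric inequality closes the proof.
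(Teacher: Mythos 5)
First, note that the paper does not prove this statement at all: it is quoted verbatim from B\"or\"oczky and Szab\'o \cite{BS} (their Lemma 8 and Lemma 9(iii)), so any comparison is really with that reference, whose argument for the second half runs through area/angular-excess estimates (triangulating the face and bounding the spherical excess of the hexagon from above via the vertex conditions and from below via the two interior points) rather than your chord partition.

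Your Part 1 is essentially the standard counting argument and is sound in substance, but the claim that ``a short monotonicity check'' gives $\gamma_i<\alpha(d)$ is not literally true: the apex angle of a spherical triangle with base $d$ is \emph{not} monotone in the leg lengths (for base $60^\circ$ and legs $120^\circ$ the apex angle already equals $\alpha(60^\circ)$, and it grows beyond that for longer legs). You need the additional observation that the legs $\dist(v,y_i)$ are bounded above (e.g.\ by the diameter of an equilateral $k$-gon of side $d$, $k\le 6$), which keeps you in the range where the inequality $\gamma_i<\alpha(d)$ does hold; without that remark the step is incomplete, though easily repaired.

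The genuine gap is Part 2, and you say so yourself. What you give there is a plan, not a proof: the entire burden is carried by ``a routine spherical-trigonometric inequality'' that is never written down, and the chord-partition scheme does not obviously produce it. On each side of the geodesic $pq$ the sectors at $v$ (resp.\ $v'$) include two sectors subtended by the truncated edges $p\,y_{i_1}$ and $y_{i_m}q$, which have length less than $d$ and admit no bound of the form ``$<\alpha(\cdot)$''; moreover the total angle $\pi$ at $v$ on the $F_1$ side must be compared against a sum of sector bounds that now depends on how many of the six $y_j$ land on that side, so the bookkeeping does not close up into a single inequality ``failing throughout $d<60^\circ$'' as asserted. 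The plain sector bound, as you correctly observe, only yields $\alpha(d)>60^\circ$, which is vacuous; so the quantitative heart of the statement --- that a unit-sided spherical hexagon cannot harbor two points at mutual distance $\ge d$ and at distance $>d$ from all six vertices --- is exactly what remains unproved. To close it you would need either to carry out the trigonometric estimate explicitly, or to switch to the area route of \cite{BS}: triangulate $F$ using $v$ and $v'$ into $8$ triangles all of whose sides are at least $d$, bound the area of $F$ below by $8$ times the area of the equilateral triangle of side $d$, and contradict the upper bound on the spherical excess of $F$ coming from $\sum_{k\in I(y_j)}u_k=2\pi$ and $u_k\ge\alpha(d)$ at each $y_j$. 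As it stands, Part 2 is a statement of intent rather than an argument.
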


Combining these propositions, we obtain the following combinatorial properties of $G_{13}$.
\begin{cor}
\begin{enumerate}
\item $G_{13}$ is a planar graph;
\item Any vertex of $G_{13}$ is of degree $0,3,4,$ or $5$;
\item Any face of $G_{13}$ is a polygon with $3,4,5$ or $6$ vertices;
\item If  $G_{13}$ contains an isolated vertex $v$, then $v$ lies in a hexagonal face. Moreover, a hexagonal face of  $G_{13}$ cannot contain two or more isolated vertices.
\label{cor1}
\end{enumerate}

\end{cor}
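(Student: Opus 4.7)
The plan is to derive the four items of the corollary as direct consequences of Propositions 1--5, with the key preliminary step being that $G_{13}$ is an irreducible graph. Once irreducibility is in hand, items (1)--(4) essentially fall out by applying the right proposition to the right aspect of $G_{13}$.

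First I would record that, by the definition of $d_{13}$, the set $X$ realizing $G_{13}=\cg(X)$ satisfies $|X|=13$ and $\psi(X)=d_{13}$. Since $13>6$, Proposition~2 immediately tells us that $G_{13}$ is irreducible. Item (1) is then nothing more than Proposition~1 applied to $X\subset\mathbb{S}^2$, and item (2) is just Proposition~3 applied to the irreducible graph $G_{13}$.

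For item (3), I would invoke Proposition~4, which says every face of an irreducible graph on $N$ points is a polygon with at most $\lfloor 2\pi/d_N\rfloor$ vertices. The content here is numerical: I need a lower bound on $d_{13}$ strong enough to force $\lfloor 2\pi/d_{13}\rfloor\le 6$. The arrangement $P_{13}$ from Figure~\ref{fig1} exists and has $\psi(P_{13})=\delta_{13}\approx 57.1367^\circ$, so $d_{13}\ge\delta_{13}$, giving
\[
\frac{2\pi}{d_{13}}\le\frac{2\pi}{\delta_{13}}\approx\frac{360^\circ}{57.1367^\circ}\approx 6.30<7.
\]
Hence every face has at most $6$ vertices, and of course at least $3$, yielding item (3). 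Item (4) is an immediate restatement of Proposition~5 for the irreducible graph $G_{13}$ on $13$ vertices.

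There is no real obstacle here: the corollary is a bookkeeping step that packages Propositions~1--5 into the working hypothesis for the rest of the paper. The only non-trivial ingredient is the lower estimate $d_{13}\ge\delta_{13}>2\pi/7$, which certifies that irreducible faces cannot be heptagons or larger and hence cleanly bounds face sizes in the range $\{3,4,5,6\}$ that will drive the subsequent combinatorial enumeration.
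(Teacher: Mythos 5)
Your proposal is correct and matches the paper's own (largely implicit) argument: the paper simply states that the corollary follows by ``combining these propositions,'' i.e.\ irreducibility from Proposition~2 since $13>6$, then Propositions~1, 3, 4, 5 for the four items. Your explicit verification that $d_{13}\ge\delta_{13}\approx 57.1367^{\circ}$ forces $\lfloor 2\pi/d_{13}\rfloor=6$ is exactly the small numerical check the paper leaves to the reader.
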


\subsection{Geometric properties of $G_{13}$}

Let $X\subset {\Bbb S}^2$ with $|X|=13$. Let the graph $\cg(X)$
be irreducible. Note that all faces of $\cg(X)$ are convex polygons. (Otherwise, a ``concave'' vertex of a polygon $P$ can be shifted to the interior of $P$.) Then the faces of the graph $\cg(X)$ in ${\Bbb S}^2$ are
regular triangles, rhombi, convex equilateral pentagons, and
convex equilateral hexagons. Polygons with more than six vertices
cannot occur. Note that the triangles, rhombi, or pentagons
of $\cg(X)$ cannot contain isolated vertices in their interiors.
The lengths of all edges of $\cg(X)$ equal $\psi(X)$.

Consider as parameters (variables) of $\cg(X)$ in ${\Bbb S}^2$
the set of all angles $u_i$ of its faces and $d:=\psi(X)$. Clearly, the graph $G=\cg(X)$,
$d$, and the set $\{u_i\}$ uniquely (up to isometry) determine
embedding $X\setminus\{$isolated vertices$\}$ in ${\Bbb S}^2$.

We obviously have the following constraints for these parameters.

\begin{prop}

\begin{enumerate}
\item $u_i <\pi$ for all $u_i$;
\item $u_i \ge \alpha(\psi(X))$ for all $u_i$, where
$$ \alpha(d): =\cos^{-1}\left( \frac{ \cos{d}}{1 + \cos{d}}\right)$$
is the angle of a regular triangle in ${\Bbb S}^2$
with sides of length $d$;
\item $\sum_{k \in I(v)} u_k = 2 \pi$ for all vertices $v$ of
$G$, where $I(v)$ is the set of
subscripts of angles that are adjacent  to $v$;
\end{enumerate}
\end{prop}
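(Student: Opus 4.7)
The proposition bundles three assertions of rather different character. Items (1) and (3) are essentially immediate from what has already been said, so I would dispose of them first and focus the real work on item (2). For (1), the paragraph immediately preceding the proposition already established that every face of $\cg(X)$ is a convex polygon---the argument being that a reflex angle at a vertex $v$ of a polygon $P$ would allow $v$ to be shifted into the interior of $P$, contradicting irreducibility. Convexity of every face is precisely the statement that every face-angle is less than $\pi$. For (3), the angles $u_k$ adjacent to a non-isolated vertex $v$ are exactly the face-angles at $v$, and these face-angles partition a small spherical neighborhood of $v$, hence sum to the full angle $2\pi$. (Isolated vertices are vacuously covered: by Proposition 6 they sit in the interior of a hexagon and have no incident face-angles.)

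For (2), the only substantive item, I would pick a face $F$ of $\cg(X)$ and a vertex $v$ of $F$ with face-angle $u_i$, and let $a,b$ be the two neighbors of $v$ along $\partial F$ (distinct because $F$ has at least three vertices). Since $F$ is an equilateral polygon with side length $d := \psi(X)$, one has $\dist(v,a)=\dist(v,b)=d$, and the spherical triangle with vertices $v,a,b$ has included angle $u_i$ at $v$. The spherical law of cosines then gives
$$\cos \dist(a,b) \;=\; \cos^2 d + \sin^2 d \,\cos u_i.$$
But $a,b$ are distinct points of $X$, so by definition of $\psi(X)$ we have $\dist(a,b)\ge d$, and therefore $\cos\dist(a,b)\le \cos d$. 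Substituting and factoring $\sin^2 d = (1-\cos d)(1+\cos d)$, this rearranges to
$$\cos u_i \;\le\; \frac{\cos d}{1+\cos d} \;=\; \cos\alpha(d),$$
which gives $u_i\ge \alpha(d)$, as required. (Incidentally, applying the same spherical law of cosines to a regular equilateral triangle with all sides equal to $d$ shows that $\alpha(d)$ is indeed its common vertex angle, justifying the formula for $\alpha(d)$ stated in the proposition.)

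I do not foresee any serious obstacle here: the entire proof is a short exercise in spherical trigonometry combined with the defining property of $\psi(X)$. The only non-trivial ingredient---that every face is convex---has already been proved in the preceding paragraph as a consequence of irreducibility, so for item (1) I would simply cite it rather than re-derive it. The delicate issue of isolated vertices is likewise already handled by Proposition 6, which is why item (3) does not require separate treatment of the isolated case.
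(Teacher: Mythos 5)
Your proof is correct. The paper states this proposition without proof (``We obviously have the following constraints''), and your argument --- convexity of the faces for (1), the spherical law of cosines combined with $\dist(a,b)\ge\psi(X)$ for the two neighbors $a,b$ of $v$ for (2), and the partition of the full angle $2\pi$ around a non-isolated vertex for (3) --- is exactly the standard justification the authors are implicitly relying on.
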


Let $F$ be a face of $G$. Then $F$ is a polygon with $m$ vertices, where $m=3,4,5$, or $6$. Consider all possible cases.

\medskip

\noindent{\bf 1. $m=3$: triangle.} In this case, $F$ is a regular
triangle.
\begin{prop} Let $F$ be a triangular face of $G_{13}$ with angles $u_1,u_2,u_3$. Then
 $u_{1}=u_{2}=u_{3}=\alpha_{13}:=\alpha(d_{13})$.
\end{prop}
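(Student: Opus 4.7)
The plan is to observe that this proposition is essentially a direct computation in spherical trigonometry, once we identify $F$ as a regular (equilateral) spherical triangle. The preceding discussion has already established two of the key ingredients: first, since $F$ is a face of the irreducible contact graph $G_{13} = \cg(X)$, and every edge of $\cg(X)$ has length $\psi(X) = d_{13}$, all three sides of $F$ equal $d_{13}$; second, the text explicitly notes that a triangular face of $\cg(X)$ must be a regular triangle (being a convex polygon whose edges are all geodesic arcs of the common length $\psi(X)$). Thus $F$ is a spherical equilateral triangle with side $d_{13}$, and it only remains to compute its angles.

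For this I would apply the spherical law of cosines. In a spherical triangle with sides $a, b, c$ opposite to angles $A, B, C$, one has $\cos a = \cos b \cos c + \sin b \sin c \cos A$. Specializing to $a=b=c=d_{13}$ and letting $u$ denote any one of the three angles, this becomes
$$\cos d_{13} = \cos^2 d_{13} + \sin^2 d_{13} \cos u,$$
so that
$$\cos u = \frac{\cos d_{13} - \cos^2 d_{13}}{\sin^2 d_{13}} = \frac{\cos d_{13}(1-\cos d_{13})}{(1-\cos d_{13})(1+\cos d_{13})} = \frac{\cos d_{13}}{1+\cos d_{13}}.$$
Comparing with the definition of $\alpha(d)$ given in part 2 of Proposition 5, we recognize the right-hand side as $\cos \alpha(d_{13})$, whence $u = \alpha(d_{13}) = \alpha_{13}$. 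Since $u$ was an arbitrary angle of $F$, this gives $u_1 = u_2 = u_3 = \alpha_{13}$, as claimed.

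There is essentially no obstacle here: once the face is identified as a regular spherical triangle with side $d_{13}$, the conclusion follows tautologically from the very definition of $\alpha(d)$. The only thing to be a little careful about is to confirm that $d_{13} < \pi/2$ (so that $1-\cos d_{13} \ne 0$ and the cancellation above is legitimate), which is immediate from the bound $d_{13} < 60^\circ$ coming from $k(3) = 12$ mentioned in the introduction.
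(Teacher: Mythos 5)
Your proof is correct and follows the same route as the paper, which simply observes that a triangular face is a regular spherical triangle with side $d_{13}$ and takes $\alpha(d)$ \emph{by definition} to be the angle of such a triangle (Proposition on parameter constraints, item 2); your spherical law of cosines computation just makes that definition explicit. (Minor quibble: the cancellation only requires $0<d_{13}<\pi$, not $d_{13}<\pi/2$, since $1-\cos d$ and $\sin^2 d$ vanish only at $d=0$ or $d=\pi$.)
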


\medskip

\noindent{\bf 2. $m=4$: quadrilateral.} In this case, $F=A_1A_2A_3A_4$ is a
rhombus. Then we have
 $u_{1}=u_{3}$ and $u_{2}=u_{4}$. Using
the spherical Pythagorean theorem, one can show that
$$
\cot{\frac{u_{1}}{2}}\,\cot{\frac{u_{2}}{2}} =\cos d.
$$
Then
$$
u_2=\rho(u_1,d):=2\cot^{-1}(\tan{(u_{1}/2)}\cos d).
$$

Since $u_2\ge\alpha(d)$, we have $u_1=\rho(u_2,d)\le\rho(\alpha(d),d)=2\alpha(d)$ (Fig.~\ref{rect}).
\begin{prop} Let $F$ be a quadrilateral of $G_{13}$ with
angles $u_1,u_2,u_3,u_4$. Then $u_3=u_1,\, u_4=u_2,\, u_2=\rho(u_1,d_{13}), \,  u_1=\rho(u_2,d_{13}),$ and
$\alpha_{13}\le u_i\le 2\alpha_{13}\, $ for all $\, i=1,2,3,4$.
\end{prop}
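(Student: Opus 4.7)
The statement essentially consolidates facts sketched in the preceding text, so the plan is to assemble them carefully. First, since $F$ is a face of the irreducible graph $G_{13}$ all of whose edges have length $d_{13}$, it is an equilateral spherical quadrilateral, i.e.\ a spherical rhombus. In a spherical rhombus $A_1A_2A_3A_4$ the two diagonals $A_1A_3$ and $A_2A_4$ meet at right angles and bisect the vertex angles (by the reflective symmetry of the rhombus across either diagonal, which swaps opposite vertices). This immediately gives $u_1=u_3$ and $u_2=u_4$.

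Next I would prove the trigonometric identity $\cot(u_1/2)\cot(u_2/2)=\cos d_{13}$. Let $O$ be the intersection of the diagonals. Each of the four triangles $A_1OA_2$, $A_2OA_3$, etc., is a right spherical triangle with hypotenuse $d_{13}$ (a side of the rhombus) and with acute angles $u_1/2$ and $u_2/2$ at the vertices of the rhombus. By Napier's rule for right spherical triangles with hypotenuse $c$ and opposite angles $A,B$ to the legs, $\cos c=\cot A\cot B$, which here yields exactly
\[
\cos d_{13}=\cot(u_1/2)\cot(u_2/2).
\]
Solving for $u_2$ in terms of $u_1$ gives $u_2=2\cot^{-1}\!\bigl(\tan(u_1/2)\cos d_{13}\bigr)=\rho(u_1,d_{13})$, and by symmetry $u_1=\rho(u_2,d_{13})$.

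For the bounds, the lower bound $u_i\ge\alpha_{13}$ is just Proposition~8(2) applied at each vertex of $F$. For the upper bound, observe that $\rho(\cdot,d_{13})$ is strictly decreasing on $(0,\pi)$ (since $\tan(u/2)$ is increasing and $\cot^{-1}$ is decreasing), and that $\rho(\alpha_{13},d_{13})=2\alpha_{13}$: indeed, a rhombus whose diagonal produces two copies of the regular triangle with angle $\alpha_{13}$ has angles $2\alpha_{13}$ and $\alpha_{13}$, so the identity $\cot(\alpha_{13})\cot(\alpha_{13}/2)\cdot\cot(\alpha_{13}/2)\cdot\ldots$ — more simply, one verifies directly from $\cos d=\cot(\alpha)\cot(\alpha)$ applied to the regular triangle combined with the rhombus formula that $\rho(2\alpha_{13},d_{13})=\alpha_{13}$. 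Then $u_2\ge\alpha_{13}$ together with monotonicity yields $u_1=\rho(u_2,d_{13})\le\rho(\alpha_{13},d_{13})=2\alpha_{13}$, and symmetrically for $u_2$.

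The only nontrivial step is the derivation of $\cot(u_1/2)\cot(u_2/2)=\cos d_{13}$; once the rhombus symmetry is invoked to reduce to four congruent right triangles, it is a one-line application of Napier's rules. The remaining assertions are bookkeeping using the monotonicity of $\rho$ and the boundary identity $\rho(\alpha_{13},d_{13})=2\alpha_{13}$, which I would verify by plugging $u_1=2\alpha_{13}$ into the rhombus formula and comparing with the defining relation $\cos d_{13}=\cos\alpha_{13}/(1+\cos\alpha_{13})$ for $\alpha=\alpha(d_{13})$.
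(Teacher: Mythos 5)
Your proof is correct and follows essentially the same route as the paper, which only sketches this proposition: the rhombus symmetry gives $u_1=u_3$, $u_2=u_4$, the right-triangle identity (the paper's ``spherical Pythagorean theorem,'' your Napier's rule) gives $\cot(u_1/2)\cot(u_2/2)=\cos d$, and the bounds follow from $u_i\ge\alpha_{13}$ together with the monotonicity of $\rho$ and $\rho(\alpha_{13},d_{13})=2\alpha_{13}$, which indeed checks out against $\cos\alpha_{13}=\cos d_{13}/(1+\cos d_{13})$. You have simply filled in details the paper leaves implicit.
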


\begin{figure}[htb]
\begin{center}
\includegraphics[clip,scale=1]{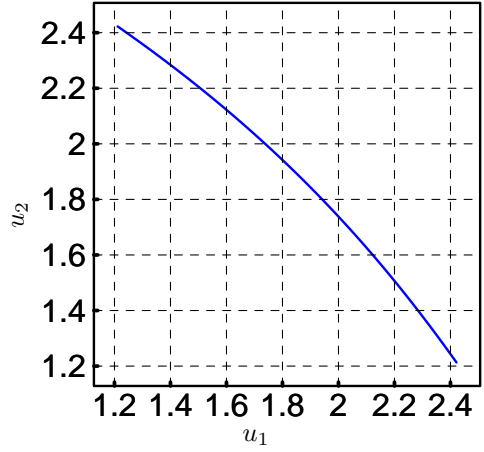}
\end{center}
\caption{The graph of the function $u_2=\rho(u_1,d)$,  where
$d=57.1367^\circ$.}
\label{rect}
\end{figure}



\medskip

\noindent{\bf 3. $m=5$: pentagon.} In this case, $F$ is a convex
equilateral pentagon $A_1A_2A_3A_4A_5$. Let
$u_{1},u_{2},u_{3},u_{4},u_{5}$ be its angles. Then $F$ is uniquely
determined by $d$ and any pair of these angles, for instance, by $(u_1,u_2)$ (Fig.~\ref{penta2}).

\begin{figure}[htb]
\begin{center}
\includegraphics[clip,scale=1]{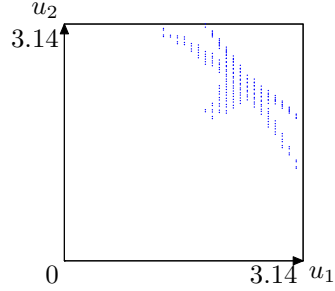}
\end{center}
\caption{The set of admissible pairs $(u_{1},u_{2})$ for a pentagon with $d=57.1367^\circ$.}
\label{penta2}
\end{figure}

It is not hard for given parameters
$x=u_1, y=u_2$ and $d$ to find $u_3,u_4,u_5$ as functions of $x,y,d$, i.e. $u_i=f_i(x,y,d)$, where $i=3,4,5$.
Let $f_1(x,y,d)=x$ and $f_2(x,y,d)=y$. Then we have $u_i=f_i(x,y,d)$ for all $i=1,\ldots,5$. We have that all $f_i(x,y,d)\ge\alpha(d)$.

Denote by $A_i'$ the image of $A_i$ after Danzer's flip.
Let $\xi_i(x,y,d)$ denote the minimum distance between $A_i'$
and $A_j$, where $j\ne i$. If $F$ is a face of $\cg(X)$ and
$\cg(X)$ is irreducible, then $F$ does not admit Danzer's flips.
Therefore, $\xi_i(x,y,d)<d$ for all $i$. Thus we have the following
proposition.
\begin{prop} Let $F$ be a pentagonal face of $G_{13}$ with
angles $u_1,\ldots,u_5$. Then $f_i(u_1,u_2,d_{13})\ge\alpha_{13}$ and
$\xi_i(u_1,u_2,d_{13})<d_{13}$ for all $i=1,\ldots,5$.
\end{prop}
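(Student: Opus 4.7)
The claim packages two necessary conditions on a pentagonal face $F = A_1 A_2 A_3 A_4 A_5$ of $G_{13}$, both to be extracted from facts already established. Since $|X| = 13 > 6$, Proposition 3 gives that $G_{13}$ is irreducible, so Proposition 8 applies and $G_{13}$ admits no Danzer flips.

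For the first inequality I would observe that, by construction, $f_i(u_1, u_2, d)$ returns the $i$-th interior angle of the equilateral spherical pentagon with parameters $(u_1, u_2, d)$. When the parameters are read off from $F$ we therefore have $f_i(u_1, u_2, d_{13}) = u_i$, and Proposition 8(2) applied to the irreducible graph $G_{13}$ gives $u_i \ge \alpha(d_{13}) = \alpha_{13}$, which is the desired bound.

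For the second inequality I would attempt the candidate Danzer flip at each pentagon vertex $A_i$. The edges $A_i A_{i-1}$ and $A_i A_{i+1}$ of $G_{13}$ have length $d_{13}$, so the reflection of $A_i$ across the great circle through $\{A_{i-1}, A_{i+1}\}$ produces a well-defined image $A_i'$ with $\dist(A_i', A_{i\pm 1}) = d_{13}$. By the absence of Danzer flips in $G_{13}$, some vertex of $X \setminus \{A_i, A_{i-1}, A_{i+1}\}$ lies at distance at most $d_{13}$ from $A_i'$. The key geometric step is then to identify this blocker with a pentagon vertex $A_{i\pm 2}$: because $F$ is a convex face of the planar graph $\cg(X)$ (Proposition 1) whose interior contains no vertex of $X$, the image $A_i'$, sitting on the opposite side of the diagonal $A_{i-1}A_{i+1}$ from $A_i$, is separated from all vertices of $X$ outside $F$ by the pentagon edges, so the nearest competitor must be one of $A_{i-2}, A_{i+2}$. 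This yields $\xi_i(u_1, u_2, d_{13}) \le d_{13}$.

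The main obstacle is upgrading to the strict inequality $\xi_i < d_{13}$. My plan is a perturbation argument: if equality held, then some $A_{i\pm 2}$ would sit exactly on the spherical circle of radius $d_{13}$ around $A_i'$, and one could then slide $A_i$ slightly off the flipped position, in a direction balancing the tangent constraints at $A_{i\pm 1}$ and $A_{i\pm 2}$, to produce a configuration in which all pairwise distances near $A_i$ strictly exceed $d_{13}$, contradicting the maximality of $d_{13}$. Carrying out this perturbation while simultaneously respecting the other edges of $G_{13}$ incident to $A_i$ is the step I expect to require the most care.
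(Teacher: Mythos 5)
Your overall strategy is the paper's: the first half is nothing more than Proposition 3.6(2) applied to the face angles (so $f_i(u_1,u_2,d_{13})=u_i\ge\alpha_{13}$), and the second half is the observation, stated in the paragraph preceding the proposition, that an irreducible graph admits no Danzer's flip at any pentagon vertex. Your first two paragraphs reproduce that argument faithfully. The problems are in the two elaborations you add. The perturbation argument for strictness in your last paragraph does not work: if $\dist(A_i',A_{i+2})=d_{13}$ and every other point of $X\setminus\{A_i\}$ is at distance at least $d_{13}$ from $A_i'$, then $(X\setminus\{A_i\})\cup\{A_i'\}$ is simply another admissible configuration with the same value of $\psi$ --- there is no contradiction with the maximality of $d_{13}$, because the minimum distance is still attained on the many edges of $G_{13}$ not incident to $A_i$, and no local move of a single vertex can raise a minimum realized elsewhere. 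What the definitions actually yield is only $\xi_i\le d_{13}$: Danzer's flip is defined by the \emph{strict} inequality $\dist(x',X\setminus\{x,y,z\})>\psi(X)$, so its failure gives $\le$, not $<$. This discrepancy is harmless for the computer elimination (which uses closed linear relaxations), but it cannot be repaired by an optimality argument, and you should not spend effort there.

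The step that genuinely needs work is the one you pass over quickly: identifying the blocking vertex with $A_{i-2}$ or $A_{i+2}$. The function $\xi_i$ depends only on the pentagon's shape, so the proposition implicitly claims the flip is blocked by a vertex of $F$ itself; the paper gives no justification, and your separation argument does not supply one. First, $A_i'$ need not lie inside $F$: if $u_i$ and $u_{i+1}$ are both close to $\alpha_{13}$, the reflected point falls outside $F$ across the edge $A_{i+1}A_{i+2}$ (the angle $\angle A_i'A_{i+1}A_{i+2}=u_{i+1}-2\beta$ becomes negative, where $\beta$ is the base angle of the isosceles triangle $A_{i-1}A_iA_{i+1}$). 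Second, even when $A_i'$ is interior to $F$, being ``separated by a pentagon edge'' from a vertex $w\notin F$ does not bound $\dist(A_i',w)$ from below by $d_{13}$ without a further argument using $\dist(w,A_{i+1})\ge d_{13}$ and $\dist(w,A_{i+2})\ge d_{13}$. So the nearest competitor to $A_i'$ could a priori be a vertex outside the pentagon, in which case the stated inequality on $\xi_i$ would not follow. This gap is inherited from the source rather than introduced by you, but as written your proposal does not close it.
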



\begin{figure}[htb]
\begin{center}
\includegraphics[clip,scale=1]{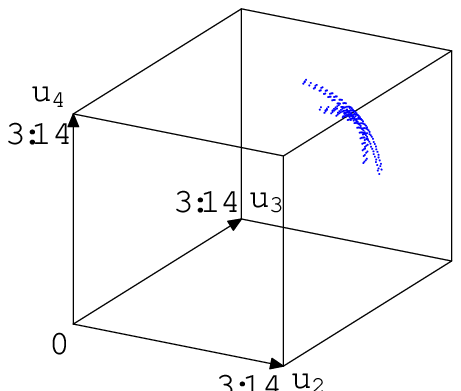}
~~~~
\includegraphics[clip,scale=1]{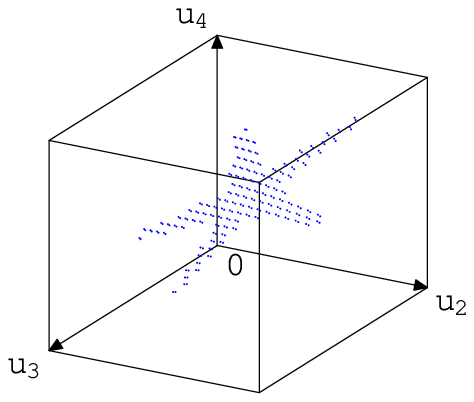}
\end{center}
\caption{Admissible angles $(u_{1},u_{2},u_{3},u_{4},u_{5})$ of a pentagon
 projected into $(u_{2},u_{3},u_{4})$}
\label{penta}
\end{figure}

\medskip

\noindent{\bf 4. $m=6$: hexagon.} In this case, $F=A_1A_2A_3A_4A_5A_6$ is a convex
equilateral hexagon with angles $u_1,\ldots,u_6$. Clearly,
$F$ is uniquely defined by any three angles and $d$.

Let $u_i=g_i(u_1,u_2,u_3,d)$ for  $i=4,5,6$. Let $g_i(u_1,u_2,u_3,d)=u_i$ for $i=1,2,3$. Then we have $u_i=g_i(u_1,u_2,u_3,d)$ for all $i=1,\ldots,6$.

In fact, for the case $m=6$ we have two subcases: (a) $F$ has
no isolated vertices, and (b) $F$ has an isolated vertex.

It is easy to see that for case 4(a) there exists an analog of Proposition 3.9.
Let $\zeta_i(u_1,u_2,u_3,d)$ denote the minimum distance between $A_i'$ and $A_j$, where $j\ne i$.
\begin{prop} Let $F$ be a hexagonal face of $G_{13}$ with
angles $u_1,\ldots,u_6$. Suppose the face $F$ has no isolated vertices in its interior. Then $g_i(u_1,u_2,u_3,d_{13})\ge\alpha_{13}$ and
$\zeta_i(u_1,u_2,u_3,d_{13})<d_{13}$ for all $i=1,\ldots,6$ (Fig.~\ref{emptyhex}).
\end{prop}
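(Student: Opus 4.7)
The plan is to mirror Proposition 3.9 in the hexagonal setting. The first inequality $g_i(u_1,u_2,u_3,d_{13}) \ge \alpha_{13}$ is an immediate consequence of Proposition 3.8(2): each of the six interior angles of $F$ is an angle of some face of the irreducible graph $G_{13}$, hence bounded below by $\alpha(d_{13}) = \alpha_{13}$. Since by definition $g_i$ expresses the $i$-th angle of $F$ as a function of $(u_1,u_2,u_3,d)$, applying this to every $i = 1,\ldots,6$ yields the claim.

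For the second inequality, I would first invoke Proposition 3.2, which ensures that $G_{13}$ is irreducible since $N=13>6$, so no Danzer's flip is admissible. Fix $i$ and consider the Danzer flip of $A_i$ over the great-circle arc $A_{i-1}A_{i+1}$, producing the reflected point $A_i'$. Because $F$ is a convex polygon (by the remark preceding Proposition 3.8) and $A_i$ lies on its boundary, $A_i'$ sits in the interior of $F$. Irreducibility then forces the existence of some vertex $w \in X \setminus \{A_i, A_{i-1}, A_{i+1}\}$ with $\dist(A_i', w) \le d_{13}$.

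The crux is to argue that this blocking vertex $w$ must in fact be one of the remaining hexagon vertices $A_j$ with $j \ne i, i-1, i+1$, so that $\zeta_i \le d_{13}$. The hypothesis that $F$ contains no isolated vertex in its interior rules out $w$ lying in $\mathrm{int}(F)$ as an isolated vertex. For $w$ outside $\overline{F}$, any geodesic from $A_i' \in \mathrm{int}(F)$ to $w$ must cross $\partial F$, which is a closed chain of six edges each of length $d_{13}$; together with the convexity of $F$, the planarity of $G_{13}$ (Proposition 3.1), and the lower bounds $u_k \ge \alpha_{13}$ from Proposition 3.8, this forces $\dist(A_i', w) > d_{13}$, contradicting the above. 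Hence $w = A_j$ for some admissible $j$, and the strictness $\zeta_i < d_{13}$ comes from the fact that the minimum in $\zeta_i$ is effectively taken over $j \ne i-1, i+1$ (the two reflection-axis neighbors are at distance exactly $d_{13}$, excluded from a genuine Danzer obstruction).

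The step I expect to be the main obstacle is the geometric separation argument in the third paragraph: showing that the reflected point $A_i'$ is sufficiently deep inside $F$ that no vertex outside $\overline{F}$ can be within $d_{13}$ of it. For the pentagon case of Proposition 3.9 this is handled implicitly by fewer admissible configurations, but for the hexagon one must exploit the already-derived angle and shape constraints on $F$ carefully, possibly with computer assistance of the kind used elsewhere in the paper.
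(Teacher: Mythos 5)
Your proposal follows the same route the paper intends, and in fact gives \emph{more} detail than the paper does: the authors' entire justification of this proposition is the sentence ``it is easy to see that for case 4(a) there exists an analog of Proposition 3.9,'' and Proposition 3.9 itself is disposed of in one line --- irreducibility (your Proposition 3.2 citation is right) forbids Danzer's flips, hence $\xi_i<d$. So your first two paragraphs reproduce the paper's argument, and the step you honestly single out as the main obstacle --- showing that the vertex blocking the flip of $A_i$ over $A_{i-1}A_{i+1}$ must be another vertex of the hexagon $F$ rather than some vertex of $X$ outside $\overline{F}$ --- is exactly the point the paper leaves unaddressed. Your sketch of that step is not yet a proof: ``convexity plus planarity plus the angle bounds force $\dist(A_i',w)>d_{13}$'' is the conclusion restated, not an argument, and the auxiliary claim that $A_i'\in\mathrm{int}(F)$ does not follow from convexity of $F$ alone, since the reflected triangle $A_{i-1}A_i'A_{i+1}$ need not be contained in the pentagon $A_{i+1}A_{i+2}\cdots A_{i-1}$. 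If you want to close this, you need a quantitative separation estimate (how far $A_i'$ stays from each edge $A_kA_{k+1}$ not incident to $A_i$, versus how close a point at distance $\ge d$ from both $A_k$ and $A_{k+1}$ on the far side of that edge can come), and the paper gives no indication of having done this by hand --- it is consistent with the authors treating it as part of the computer verification.

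Two smaller corrections. The lower bound $u_i\ge\alpha(\psi(X))$ is Proposition 3.6(2), not 3.8(2) (Proposition 3.8 concerns quadrilaterals). And the paper defines $\zeta_i$ as the minimum of $\dist(A_i',A_j)$ over \emph{all} $j\ne i$, including $j=i\pm1$, for which the distance is exactly $d$; so $\zeta_i\le d_{13}$ holds trivially, and the content of the strict inequality is that some non-neighbour $A_j$ comes strictly within $d_{13}$ of $A_i'$. Your parenthetical explanation of the strictness, which excludes $i\pm1$ from the minimum, does not match the paper's definition, and in any case the failure of a Danzer flip only yields $\dist(A_i',w)\le d_{13}$ rather than a strict inequality --- a $<$ versus $\le$ slippage already present in the paper's own statement.
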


\begin{figure}[htb]
\begin{center}
\includegraphics[clip,scale=1]{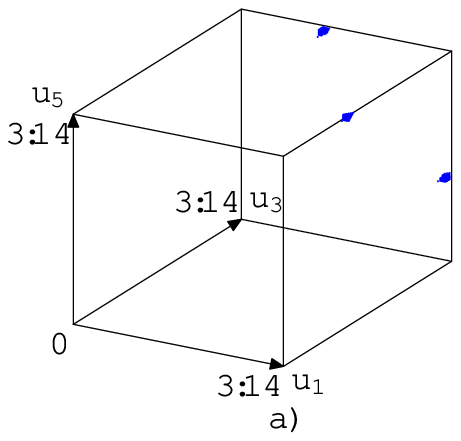}
~~~~
\includegraphics[clip,scale=1]{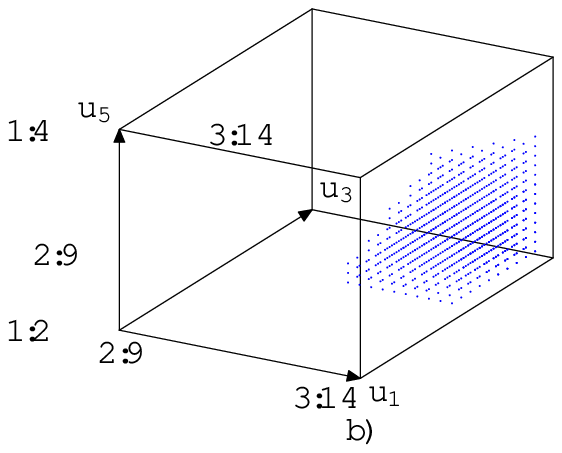}
\end{center}
\caption{(a) The set of admissible triplets for $(u_{1},u_{2},u_{3})$ for empty hexagon with $d=57.1367^{\circ}$. (b) A component with zoom.}
\label{emptyhex}
\end{figure}

Now consider case 4(b). Denote by $\Pi$ the set of all points
$p$ in the interior of $F$ such that there is a pair $(i,j),\,
1\le i,j\le 6, \, i\ne j,$ with
$\dist(p,A_i)=\dist(p,A_j)=d$. Clearly,  $|\Pi|\le 18$.

Let  $p\in\Pi$ be defined by a pair $(i,j)$. Denote by $K(p)$
the set of all $k=1,\ldots,6$ such that $k\ne i$ and $k\ne j$.
Let
$$
\lambda(u_1,u_2,u_3,d)=\tilde\lambda(F):=\max\limits_{p\in\Pi}{\min\limits_{i\in
K(p)}\{\dist(p,A_i)\}}.
$$
Since $F$ contains an isolated vertex, we have $\tilde\lambda(F)\ge d$.
\begin{prop} Let $F$ be a hexagonal face of $G_{13}$ with
angles $u_1,\ldots,u_6$. Suppose the face $F$ has an isolated vertex in
its interior.

Then $g_i(u_1,u_2,u_3,d_{13})\ge\alpha_{13}$  for
all $i=1,\ldots,6$ and $\lambda(u_1,u_2,u_3,d_{13})\ge d_{13}$.
\end{prop}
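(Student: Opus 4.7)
The first assertion $g_i(u_1,u_2,u_3,d_{13}) \ge \alpha_{13}$ for $i=1,\ldots,6$ is immediate from Proposition 3.8(2) applied to the six angles of $F$. The content of the proposition therefore lies in the second inequality, which asks for a point $p$ in the interior of $F$ at distance exactly $d := d_{13}$ from two of the hexagon vertices and at distance at least $d$ from the remaining four.

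Let $v \in X$ be the isolated vertex in the interior of $F$. Because $v$ is isolated in $\cg(X)$, no other point of $X$ is at distance exactly $d$ from $v$, and since $d = \psi(X)$ is the minimum pairwise distance in $X$ this strengthens to the strict inequality $\dist(v,A_i) > d$ for every $i$. Introduce the closed region
\[
 U \;:=\; \{\, p \in F : \dist(p,A_i) \ge d \text{ for all } i = 1,\ldots,6 \,\}.
\]
Then $v$ lies in the interior of $U$, so $U$ has nonempty interior, and $U$ is clearly compact. Moreover $U \subset \mathrm{int}(F)$: on an edge $A_iA_{i+1}$ the distance to $A_i$ is at most $d$ with equality only at $A_{i+1}\notin U$, and the hexagon vertices themselves fail the defining condition trivially.

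The boundary $\partial U$ is then a nonempty union of closed curves lying in $\mathrm{int}(F)$ and assembled from arcs of the spherical circles $C_i := \{p : \dist(p,A_i) = d\}$. The critical step is to rule out that some connected component of $\partial U$ is a single full circle $C_i$: this would force $C_i \subset U$, yet the neighbouring hexagon vertex $A_{i+1}$ satisfies $\dist(A_{i+1},A_i)=d$ and so lies on $C_i$, while $\dist(A_{i+1},A_{i+1})=0<d$ shows $A_{i+1}\notin U$, a contradiction. Each boundary component of $\partial U$ therefore contains a \emph{junction} point $p^*$ at which arcs of two distinct circles $C_i, C_j$ meet. At this $p^*$ one has $\dist(p^*,A_i) = \dist(p^*,A_j) = d$, so $p^*\in \Pi$ with associated pair $(i,j)$, and the containment $p^*\in U$ gives $\dist(p^*,A_k) \ge d$ for every $k \in K(p^*)$. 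This yields $\tilde\lambda(F) \ge d$, i.e.\ $\lambda(u_1,u_2,u_3,d_{13}) \ge d_{13}$.

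The main obstacle in the argument is the exclusion of single-circle boundary components; once that topological fact is in place, the existence of a junction point where two distinct circles meet is an automatic consequence of the connectivity of each boundary component, and the conditions demanded by the definitions of $\Pi$ and $\tilde\lambda$ are then built into the construction of $U$.
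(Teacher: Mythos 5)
Your argument is correct and is essentially a rigorous elaboration of what the paper does: the paper dispatches the second inequality with the single remark that ``Since $F$ contains an isolated vertex, we have $\tilde\lambda(F)\ge d$,'' the implicit mechanism (made explicit later in Section~5.3) being to move the isolated vertex within the region $\{p\in F:\dist(p,A_i)\ge d \ \text{for all } i\}$ until two of the distances become exactly $d$ --- which is precisely the junction point your boundary analysis produces. One small slip: the bound $g_i\ge\alpha_{13}$ follows from Proposition~3.6(2), not Proposition~3.8(2).
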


\section{Proof of Lemma 1}

Here we give a sketch of our computer proof. For more details
see http://dcs.isa.ru/taras/tammes13/~.

The proof consists of two parts:\\
(I) Create the list $L_{13}$ of all graphs with 13 vertices that
satisfy Corollary 3.1;\\
(II) Using linear approximations and linear programming remove
from the list $L_{13}$ all graphs that do not
satisfy the geometric properties of $G_{13}$ (see Propositions
3.6-3.11).

\medskip

(I). To create $L_{13}$ we use the program {\it plantri}
(see \cite{PLA2}).\footnote{The authors of this program are Gunnar
Brinkmann and Brendan McKay.} This program  is the  isomorph-free
generator  of planar graphs, including triangulations, quadrangulations, and
convex polytopes.
(The paper \cite{PLA1} describes plantri's principles of operation,
the basis for its efficiency,
and recursive algorithms behind many of its capabilities.)

The program plantri generates 94,754,965 graphs in $L_{13}$,
i.e. graphs that satisfy Corollary 3.1. Namely, $L_{13}$
contains 30,829,972 graphs with triangular and quadrilateral
faces; 49,665,852 with at least one pentagonal face  and with
triangular and quadrilaterals;
13,489,261 with at least one hexagonal face which do not contain
 isolated vertices;
769,375\footnote{Perhaps contains isomorphic graphs.} graphs with one isolated vertex,
$505$\footnotemark[3]  with two isolated vertices, and no
graphs with three or more isolated vertices.

\medskip

(II). Let us consider a graph $G$ from $L_{13}$. We start from
the level of approximation $\ell=1$.  Now using
Propositions 3.6-3.11 we write  linear equalities and inequalities
for the parameters (angles) $\{u_i\}$ of this graph.

For $\ell=1$ we use the following linear equalities and
inequalities:\\
(i) 13 linear equalities  $\sum_{k \in I(v)} u_k = 2 \pi$ in
Proposition 3.6(3);\\
(ii) Since  $57.1367^\circ=0.9972\le d_{13}<1.021=58.5^\circ$,
we have
$ 1.2113\le\alpha_{13}<  1.2205$;\\
(iii) For a quadrilateral
from Proposition 3.8 we have equalities $u_3=u_1, \, u_4=u_2$,
and inequalities $\alpha_{13}\le u_i\le 2\alpha_{13}, \, i=1,2$;\\
(iv) For a quadrilateral, (ii) and $u_2=\rho(u_1,d_{13})$
 yield $3.6339\le u_1+u_2\le 3.779657$;\\
(v) Let $F$ be  a pentagonal face. Consider all vectors $U_5:=\{(u_1,\ldots,u_5)\}$
that satisfy Proposition 3.9 (see Fig.~\ref{penta}). We use
a convex polytope $P_5$ in ${\Bbb R}^5$ which contains $U_5$.
Actually, $P_5$ is defined by certain linear inequalities. For
instance,
$2.96 \le u_1 + u_2 - 0.63 u_4 \le 3.26$,
 $ u_1+ u_3 + 1.8 u_2 \le 9.05$, etc; \\
(vi) For a hexagonal face $F$ that contains no isolated
vertices, using
Proposition 3.10, we find a set of three polytopes
$P_6^k$, $U_6 \subset \cup_{k=1}^3 P_6^k$  which are defined
by the inequalities
  $1.2 \le u_k, u_{k+3}
\le 1.34$ and $2.9 \le u_{k+1},u_{k+2},u_{k+4},u_{k+5}$;\\
(vii) For a hexagonal face with an isolated vertex, Proposition
3.11 yields $\sum_{i=1}^6 u_i \ge 15.936$.

\medskip

Using this set of linear inequalities,
we find minimal and maximal value of each variable by linear
programming. This gives
us a convex region in the space of possible solutions that contains
all possible solutions
for given graph (if they exist).
If the region becomes empty, this means that we can eliminate
the graph considered.
This step ``kills'' almost all graphs.
After this step, there remain   $2013$ graphs without hexagons,
$40910$ graphs with hexagons
and without isolated vertices, $9073$ graphs
with one isolated vertex, and $272$ graphs with two isolated
vertices.

For $\ell=2$ we use the following idea.
This region is smaller than the original region, so we can adjust
linear estimates for nonlinear equalities and inequalities.
For quadrilaterals we adjust inequalities using (iv).
For pentagons we are using an additional set of inequalities. Namely,
using functions $f_3(u_1,u_5,d)$, $f_3(u_2,u_4,d)$, and bounds for $u_1,u_2,u_4,u_5,d$
can be obtained minimal and maximal linear bounds for $u_3$.

Repeating this procedure , we obtain a chain of nested convex 
regions, which contain all possible solutions. This chain converges to empty or non-empty region.
If this result is empty, the graph is eliminated.
After this step, only $260$ graphs remain in the main group,
$9991$ graphs remain
in the second group, $126$ graphs remain in the third group,
and no graphs remain in the fourth group.

For the level of approximation $\ell=3$, we split the
region into two smaller regions
and repeat the same procedure as for $\ell=2$ independently.
For graphs with
empty hexagons, we make a specific split by taking different values
of $k$ from item (vi) (see above).

Repeating the splitting procedure, we ``kill'' all graphs except $\Gamma_{13}^{(i)}$.

This result gives us two surprises. We expected that subgraphs
were to remain, because they can be infinitesimally close to $\Gamma_{13}$,
and so they cannot be eliminated by computer program.
But we didn't expect that all other graphs would be killed.
Also manually, we found two subgraphs which could be contact graphs:
$\Gamma_{13}^{(1)}$ and $\Gamma_{13}^{(2)}$. But we missed the graph
$\Gamma_{13}^{(3)}$ with one isolated vertex, which was found by computer
program.

\begin{figure}[htb]
\begin{center}
\includegraphics[clip,scale=1]{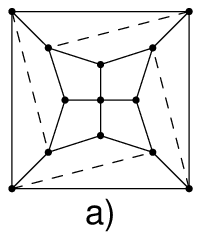}
~~~~
\includegraphics[clip,scale=1]{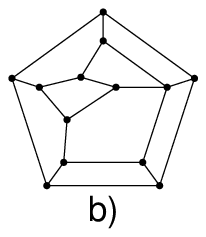}
~~~~
\includegraphics[clip,scale=1]{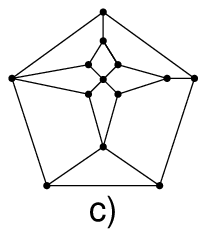}
~~~~
\includegraphics[clip,scale=1]{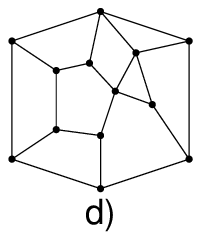}
~~~~
\includegraphics[clip,scale=1]{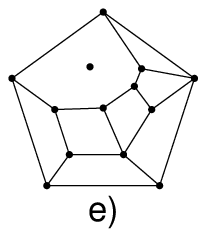}
~~~~
\includegraphics[clip,scale=1]{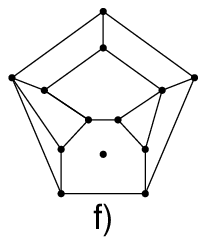}
\end{center}
\caption{Strongest eliminated graphs}
\label{fig6}
\end{figure}

{\noindent \bf Remark}. In Fig.~\ref{fig6} are presented
examples of
graphs which are not isomorphic to $\Gamma_{13}^{(i)}$ and have
been eliminated only after many iterations. The most ``surviving''
graph is $a)$.
This graph is also a subgraph of $\Gamma_{13}^{(0)}$. After eliminating
four edges, the graph contains four pentagons. The reason why
it was eliminated because there are angles $u_i$ which are slightly bigger
than $\pi$, so that the pentagons are not convex. Therefore, this graph is not irreducible.
Other most surviving  graphs were ``strong'' because they have
several pentagons and hexagons. Note that here we use weak bounds for
pentagons  and hexagons given by (v),(vi),(vii).
Our elimination procedure works very fast when we have sufficiently
many triangles and quadrilaterals, and it works worse (slowly) when we have
several pentagons and hexagons.

\section{Proof of Lemma 2}

\begin{proof} This proof is based on geometric properties of $G_{13}$. In 
Section 4 we substitute all nonlinear equations by certain linear inequalities. Note that a statement $d_{13}\approx\delta_{13}$ is a by-product of this approximation. Here we prove that $d_{13}=\delta_{13}$ based on original equations. 
	
Lemma 1 says that $G_{13}=\Gamma_{13}^{(i)}$, where $i=0,1,2$, or 3. We are going to prove that if  $\cg(X)=\Gamma_{13}^{(i)}$ with $i>0$, then $\psi(X)<\delta_{13}=\psi(P_{13})$.

\begin{figure}[h]
\begin{center}
\includegraphics[clip,scale=1]{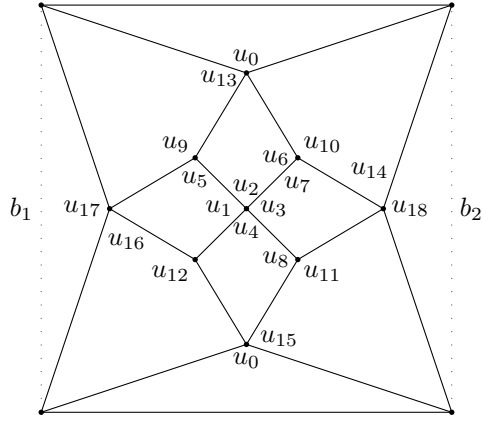}
\end{center}

\caption{Angles of $\Gamma_{13}^{(2)}$}
\label{fig9}
\end{figure}


\noindent{\bf 5.0. Angles of $\Gamma_{13}^{(2)}$.} Let $u_0:=\alpha(d)$. For $G_{13}=\Gamma_{13}^{(2)}$ we have (see Fig.~\ref{fig9}):

\begin{tabular}{ccc}

$u_5 =\rho(u_1,d)$ & 
$u_6 =\rho(u_2,d)$ &
$u_9=2 \pi - u_5 -u_6 $ \\
$u_{13} =\rho(u_{9},d)$ & 
$u_{14} =2\pi-u_0-u_{13}-u_2$ & 
$u_{10}=\rho(u_{14}) $ \\
$u_7 =2\pi -u_6 -u_{10}$ & 
$u_3 =\rho(u_7,d)$ &
$u_4=2 \pi - u_1 -u_2 -u_3$ \\ 
$u_8 =\rho(u_4,d)$ & 
$u_{11}=2 \pi - u_7 -u_8$ & 
$u_{12}=2 \pi - u_8 -u_5 $\\
$u_{15} =\rho(u_{11},d)$ &
$u_{16} =\rho(u_{12},d)$ & 
\end{tabular}

\medskip

Therefore, for $3\le i\le 16$  the value $u_i$  are functions in the variables $u_1,u_2,d$. Since we have also an additional  equation  for the vertex $v_8$ (see  Fig.~\ref{fig1} and Fig.~\ref{fig9}): 
$$u_0+u_{15}+u_4+u_{16}=2\pi,$$ 
the value $d$ is a function in $u_1,u_2$, as well as $u_2$ is a function in the variables $u_1$ and $d$. 
Thus, all $u_i$ and $d$ are functions in $u_1, u_2$ or in $u_1, d$. 

Now we consider three cases $G_{13}=\Gamma_{13}^{(i)}$, where $i=1,2,3$. 

\medskip

\noindent{\bf 5.1. The case $G_{13}=\Gamma_{13}^{(1)}$.} In this case $u_{17}=u_0$. Then for the vertex $v_7$  we have the equation: 
$$u_1+u_{13}+u_0+u_{16}=2\pi.$$ 
From this it follows that $u_1$ and therefore  all $u_i$  are functions in $d$. Note that $$ u_{18}=2\pi-u_{14}-u_3-u_{15}.$$
Thus, $u_{18}$ is a function in $d$ (see Fig.~\ref{u18d}).

\begin{figure}[h]
\begin{center}
\includegraphics[clip,scale=.65]{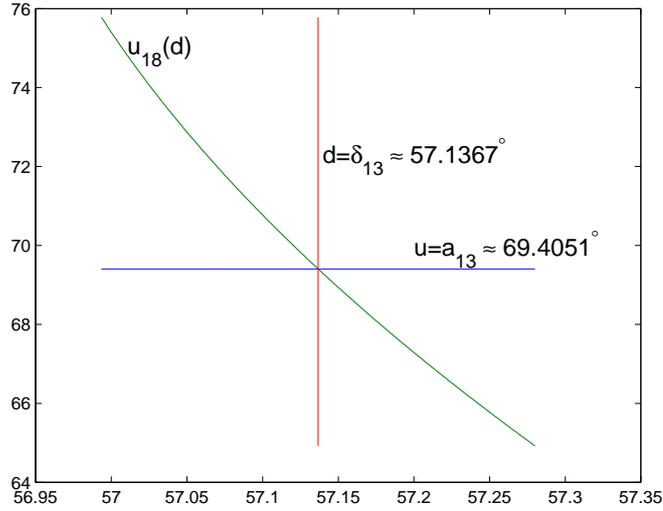}
\end{center}
\caption{The graph of the function $u_{18}(d)$}
\label{u18d}
\end{figure}

If $G_{13}=\Gamma_{13}^{(1)}$, then $u_{18}>u_0\ge a_{13}$. Since the function $u_{18}(d)$ is monotone decreasing, we have $u_{18}(d) > a_{13}$ only if  
$d<\delta_{13}$. 
Thus, $G_{13}\ne\Gamma_{13}^{(1)}$.  

\medskip

\begin{figure}[htb]
\begin{center}
\includegraphics[clip,scale=0.55]{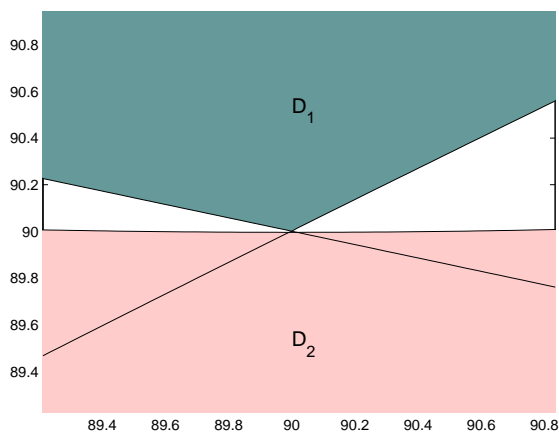}
\end{center}
\caption{$D_1$ and $D_2$}
\label{D12}
\end{figure}


\noindent{\bf 5.2. The case $G_{13}=\Gamma_{13}^{(2)}$.}   It is already shown that $d$ and all $u_i$  are functions in $u_1, u_2$. Let 
$$ 
D_1:=\{(u_1,u_2):  u_{17}\ge u_0, \; u_{18}\ge u_0\} \; \mbox{ and } \: D_2:=\{(u_1,u_2):  u_{0}=\alpha(d)\ge a_{13}\}.
$$

We can see from  Fig.~\ref{D12} that the intersection  $I:=D_1\cap D_2\subset{\Bbb R}^2$  consists of one point with $u_1=u_2=90^\circ.$   It is not hard to prove this fact. Indeed, conversely, $d_{13}>\delta_{13}$ and there is a point $(u_1,u_2)$ on the boundary of $I$ such that $u_{17}=u_0$ or $u_{18}=u_0$.  
Therefore, we have the same case as in 5.1, a contradiction. 
Thus, $G_{13}\ne\Gamma_{13}^{(2)}$. 

\medskip

\noindent{\bf 5.3. The case $G_{13}=\Gamma_{13}^{(3)}$.} This case can be considered by the same method as the case $G_{13}=\Gamma_{13}^{(2)}$. Actually, for given $u_1,u_2,d$ all angles  $u_i, 3\le i\le 16, i\ne 15$ can be found by the same formulas as in 5.0. On the other hand, 
$$
u_{15}=2\pi-u_4-u_{16}-u_0.
$$
Then all $u_i$ are functions in the variables $u_1,u_2,$ and $d$. Since $u_{17}=u_0$ (or equivalently $b_1=d$), we have the equation: 
$$u_1+u_{13}+u_0+u_{16}=2\pi.$$ 
It yields that all $u_i$ depend on two parameters.

\begin{figure}[h]
\begin{center}
\includegraphics[clip,scale=1]{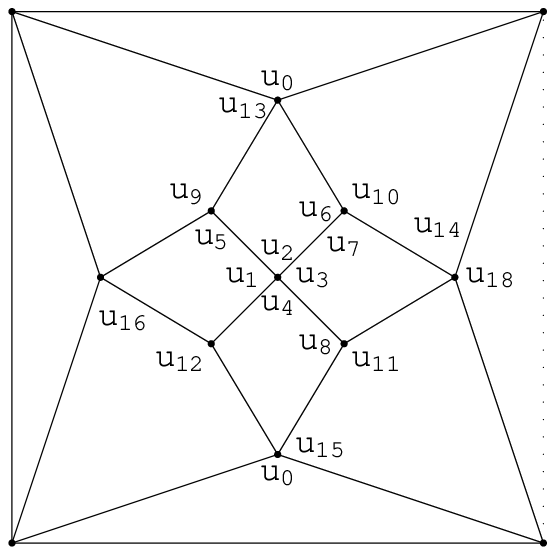}
~~~
\includegraphics[clip,scale=1]{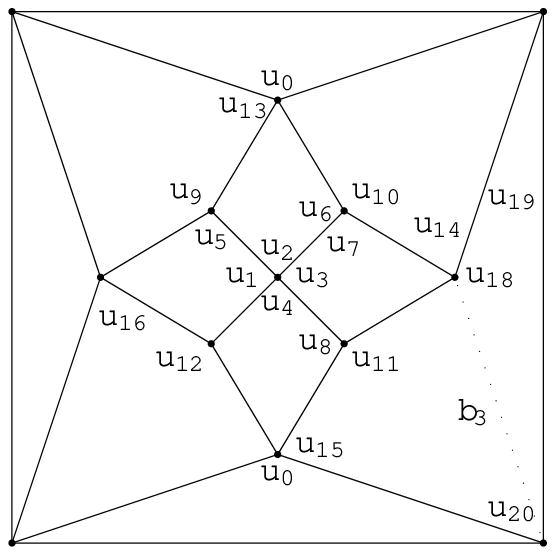}
\end{center}
\caption{Two subcases for the case $G_{13}=\Gamma_{13}^{(3)}$.}
\label{two_cases}
\end{figure}

The vertex $v_{13}$ is isolated. In fact, we can shift this point in such a way that at least two edges $v_{13}v_k$, where $k=8,9,10,12$, have lengths $d$. 
Then for two other edges  we have inequalities: $\dist(v_{13},v_i)\ge d$ and $\dist(v_{13},v_j)\ge d$. 

Arguing as in 5.2, we can show that there are parameters $u_1, u_2$ such that  $u_0>a_{13}$ and at least one of the inequalities $\dist(v_{13},v_k)\ge d, \; k=i, j,$   becomes equality.  
It is not hard to see that there are exactly two geometrically nonequivalent cases with exactly one edge $v_{13}v_k, k=8,9,10,$ or $12$, such that $\dist(v_{13}v_k)>d$. These cases are shown in Fig.~\ref{two_cases}.

Actually, the first subcase is case  5.1. For the second subcase consider the pentagon $F:=v_5v_8v_{12}v_{13}v_{10}$. All angles of $F$ can be found as functions in $u_1,d$. Since $d$ and any two angles of $F$ define all other angles we can use one of these equations to find $u_1$ as a function in $d$. Then $u_{19}$ (see Fig.~\ref{two_cases}) is a function in $d$. In fact, the graph of the function $u_{19}(d)$ is very similar to the graph $u_{18}(d)$ in Fig.~\ref{u18d}, and $u_{19}(d)$ is a monotone decreasing function.   Thus, $u_{19}(d)$ cannot be greater than $a_{13}$, and $G_{13}\ne\Gamma_{13}^{(3)}$. 

\medskip 

We see that if $\cg(X)=G_{13}$, then $\cg(X)$ is isomorphic to $\Gamma_{13}$. Moreover,
 $X$ is uniquely defined up to isometry and $\psi(X)=\delta_{13}\approx
57.1367^\circ$. This completes the proof.
\end{proof}

\medskip

\noindent{\bf Acknowledgment.} We  wish to thank Robert Connelly, Alexey Glazyrin, Nikita Netsvetaev,
and G\"unter Ziegler for helpful discussions and  comments.

\medskip

\medskip

\medskip

\medskip

\medskip

\medskip

O. R. Musin, Department of Mathematics, University of Texas at
Brownsville, 80 Fort Brown, Brownsville, TX, 78520, USA.

 {\it E-mail address:} oleg.musin@utb.edu

\medskip

A. S. Tarasov, Institute for System Analysis, Russian Academy
of Science, 9 Pr. 60-letiya Oktyabrya, Moscow, Russia.

{\it E-mail address:} tarasov.alexey@gmail.com

\end{document}